\documentclass[pdflatex,sn-mathphys-num]{sn-jnl}

\usepackage{graphicx}%
\usepackage{multirow}%
\usepackage{amsmath,amssymb,amsfonts}%
\usepackage{amsthm}%
\usepackage{mathrsfs}%
\usepackage{mathtools}
\usepackage[title]{appendix}%
\usepackage{xcolor}%
\usepackage{textcomp}%
\usepackage{manyfoot}%
\usepackage{booktabs}%
\usepackage{cleveref}
\usepackage{algorithm}%
\usepackage{algorithmicx}%
\usepackage{algpseudocode}%
\usepackage{listings}%
\numberwithin{equation}{section}
\theoremstyle{thmstyleone}%

\newtheorem{theorem}{Theorem}

\newtheorem{assumption}{Assumption}
\theoremstyle{remark}
\newtheorem{remark}{Remark}
\theoremstyle{definition}

\newtheorem{problem}{Problem}
\newcommand{\R}{\mathbb{R}}

\newcommand{\E}{\mathbb{E}}
\newcommand{\Prob}{\mathbb{P}}
\newcommand{\calL}{\mathcal{L}}
\newcommand{\calU}{\mathcal{U}}
\newcommand{\calT}{\mathcal{T}}
\newcommand{\calP}{\mathcal{P}}
\newcommand{\calE}{\mathcal{E}}
\newcommand{\calF}{\mathcal{F}}
\newcommand{\vstar}{v^{\star}}

\begin{document}

\title{Controlling the Swarm: Sparse Actuation and Collision Avoidance under Stochastic Delay
}

\author[1,2]{\fnm{Jiguang} \sur{Yu}}\email{jyu678@bu.edu}
\equalcont{This manuscript was accpeted for the 3rd annual Northeast Systems and Control Workshop (NESCW) at Princeton University. This version is for collecting suggestions and is not ready for peer review.}



\affil[1]{\orgdiv{Division of Systems Engineering},
  \orgname{Boston University},
  \orgaddress{\city{Boston}, \postcode{02215}, \state{MA}, \country{USA}}}

\affil[2]{\orgdiv{Department of Electrical and Computer Engineering},
  \orgname{Boston University},
  \orgaddress{\city{Boston}, \postcode{02215}, \state{MA}, \country{USA}}}


\abstract{
Classical flocking models demonstrate how local interactions generate emergent order, but real-world multi-agent deployments are bound by severe constraints: limited actuator availability, heterogeneous communication latencies, and environmental noise. In this talk, we present a unified finite-$N$ framework that tackles the interplay of these exact mechanisms. We study a delayed stochastic leader--follower particle system featuring topological communication, singular repulsion, and bounded sparse leader actuation.

A central challenge in such systems is mathematical well-posedness, as discontinuous communication laws and singular repulsions clash with standard strong It\^o frameworks. We resolve this by introducing an augmented Lyapunov functional that simultaneously enforces a strict collision barrier and closes a uniform Gr\"onwall estimate. Building on this rigorous foundation, we formulate a free-terminal-time, chance-constrained optimal control problem. We show that temporally sparse, bang--off--bang leader actuation not only drastically reduces control effort compared to continuous baselines, but also reveals non-monotone sensitivities to leader density. Ultimately, we demonstrate that in delayed stochastic swarms, adding more direct actuation is not strictly optimal---highlighting a highly non-trivial resource allocation paradox in cooperative control.
}

\keywords{Multi-agent systems, delayed stochastic dynamics, sparse control, leader--follower swarms, collision avoidance, chance-constrained optimization, augmented Lyapunov functional.}

\maketitle

\tableofcontents

\section{Introduction}
\label{sec:intro}
Coordinating large populations of autonomous agents under limited actuation,
communication delay, and stochastic forcing is a central challenge in modern
multi-agent systems. In realistic deployments, only a small subset of agents
can be directly actuated, information is received with heterogeneous
pair-dependent latencies, and environmental uncertainty perturbs the
dynamics through both idiosyncratic and common noise. These three
features---sparse leader intervention, delayed information flow, and
stochastic disturbance---are not peripheral complications; they are often the
dominant mechanisms shaping collective performance.

Classical flocking and consensus models provide the conceptual foundation for
emergent coordination, but they do not by themselves address this full
combination of effects. Deterministic Cucker--Smale-type models explain how
local interactions can generate large-scale order
\cite{cucker2007emergent,cucker2007mathematics,ha2009simple,motsch2011new,
motsch2014heterophilious}, while control-oriented leader--follower
extensions show that sparse actuation can influence global behavior
\cite{caponigro2013sparse,caponigro2015sparse,fornasier2014mean,
borzi2015modeling,albi2014boltzmann,bongini2017mean}. Delayed stochastic
variants, however, introduce two additional difficulties: the alignment term
depends on past information rather than the instantaneous state
\cite{erban2016cucker,choi2017cucker,pignotti2018convergence,liu2014flocking},
and singular repulsive interactions require a separate collision-avoidance
analysis before one can pass from local to global well-posedness
\cite{cucker2010avoiding,park2010cucker,carrillo2017sharp}. 

This paper develops a unified finite-$N$ framework that brings these threads
together. We study a delayed stochastic leader--follower particle system with
topological communication, singular repulsion, bounded sparse leader
actuation, and a free-terminal-time chance-constrained control objective. A
central modeling issue is that the physically natural communication law is
discontinuous in the joint configuration: topological ranks and availability
indicators jump when neighbor order changes. Since such discontinuities are
not compatible with the strong It\^o framework used for the well-posedness
theory, we explicitly distinguish between an exact discontinuous
communication law, used at the modeling level, and a regularized
surrogate, used in the analysis. We do not pursue the singular limit from
the regularized system back to the exact discontinuous one.

Our contributions are fourfold. First, we formulate a free-terminal-time
chance-constrained sparse-leader control problem in which the objective
balances terminal speed, actuation sparsity, and probabilistic safety.
Second, for the regularized delayed stochastic particle system, we prove
local strong well-posedness on the collision-free region. Third, under a
model-dependent barrier condition preventing collisions, we extend the local
solution globally, via an augmented Lyapunov functional that
simultaneously enforces a collision barrier and closes a Gr\"onwall-type
estimate on the state norm. Fourth, we present a computational case study
showing that a sparse feedback benchmark can outperform a continuously active
baseline at the nominal design point, while revealing a nontrivial feasible
window in terminal time and a non-monotone dependence on both delay and
leader density.

The paper is organized as follows. \Cref{sec:related} situates the paper
within the related literature. \Cref{sec:network} introduces the exact and
regularized communication laws. \Cref{sec:dynamics} formulates the delayed
stochastic particle dynamics and admissible controls. \Cref{sec:control}
presents the time-optimal sparse-leader problem. \Cref{sec:collision} and
\Cref{sec:collision-analysis} develop the local and global well-posedness
theory through a collision-avoidance barrier argument based on an augmented
Lyapunov functional. \Cref{sec:continuum} discusses the non-Markovian
continuum outlook induced by pair-dependent delays. Finally,
\Cref{sec:numerics} reports the numerical case study.
\section{Related Work}
\label{sec:related}

Our contributions sit at the intersection of four literatures: Cucker--Smale-type
alignment models, sparse leader--follower control, delayed and stochastic
flocking, and chance-constrained stochastic control. We briefly situate the
present paper within each, and then comment on the continuum (path-space)
viewpoint invoked later in \Cref{sec:continuum}.

\paragraph{Alignment models and their mean-field limits}

The Cucker--Smale model \cite{cucker2007emergent,cucker2007mathematics}
introduced a distance-dependent averaging law that has since become a
canonical vehicle for the analysis of emergent flocking. Unconditional and
conditional flocking theorems were sharpened in \cite{ha2009simple}, and the
role of communication-weight normalization was clarified by Motsch and Tadmor
\cite{motsch2011new,motsch2014heterophilious}, whose analysis of
heterophilious dynamics underlies the normalized alignment operator we use in
\eqref{eq:alignop}. Kinetic and hydrodynamic descriptions in the mean-field
limit were developed in \cite{ha2008particle,carrillo2010asymptotic}. These
classical Markovian continuum formulations provide the backdrop against which
the pair-dependent delays of our model generate genuinely non-Markovian
behavior (see \Cref{sec:continuum}).

\paragraph{Sparse leader--follower control}

The sparse-control viewpoint was pioneered by Caponigro, Fornasier, Piccoli,
and Tr\'elat \cite{caponigro2013sparse,caponigro2015sparse}, who showed that
component-wise $\ell^1$-in-space penalties on the control steer flocks to
consensus using actuation concentrated on a small leader subset. The
mean-field counterpart is developed in
\cite{fornasier2014mean,bongini2017mean}, and leader-driven refinements
appear in \cite{borzi2015modeling,albi2014boltzmann}. Our $L^1$-in-time
control cost and bang--off--bang sparsity heuristic (\Cref{rem:heuristic})
follow this framework, but we depart from it in three important ways: we
work in a stochastic setting with pair-dependent delays, we replace the
classical deterministic consensus target with a probabilistic target tube,
and we treat the horizon as a free terminal-time decision variable.

\paragraph{Delayed and stochastic alignment}

Delay effects in alignment dynamics have been studied in both deterministic
and stochastic forms. For constant or uniform delays, unconditional flocking
results were obtained in
\cite{liu2014flocking,pignotti2018convergence,choi2017cucker}, the latter two
incorporating normalized interaction weights and time-varying delays.
Noise-driven flocking, with or without a common component, is treated in
\cite{ha2009emergence,ahn2010stochastic,cucker2008flocking}. The combined
delay-plus-noise setting closest to ours is \cite{erban2016cucker}, which
studies a stochastic Cucker--Smale model with noise and delay but does not
address sparse actuation, singular repulsion, or chance constraints. Our
model subsumes these features: it has pair-dependent (rather than single
common) delays, both idiosyncratic and common noise, a singular pairwise
repulsion, and an explicit controlled leader subset.

\paragraph{Collision avoidance in singular alignment}

Deterministic collision avoidance in Cucker--Smale models with singular
pairwise repulsion was established by Cucker and Dong
\cite{cucker2010avoiding}, with sharp sufficient conditions later given by
Carrillo, Choi, Mucha, and Peszek \cite{carrillo2017sharp} and
bonding-force extensions in \cite{park2010cucker}. The
Lyapunov-functional argument in our \Cref{sec:collision-analysis} and
Appendix~\ref{app:collision-proof} is in the same spirit but adapted to the
stochastic setting: the augmented functional must additionally absorb the
diffusion trace term, and the singular-force cancellation must be combined
with a Gr\"onwall estimate that closes uniformly in the localization radius.
To our knowledge, the combination of pair-dependent delay, common noise,
singular repulsion, and sparse actuation has not previously been analyzed in
this form.

\paragraph{Chance-constrained stochastic control}

Chance constraints are a standard device for imposing probabilistic
feasibility on optimization problems with stochastic parameters
\cite{banerjea1998stochastic,nemirovski2007convex}. The use of sample-based and
scenario-based approximations in stochastic model predictive control is reviewed in
\cite{mesbah2016stochastic}. We use the sample-average formulation in
\Cref{rem:computational-chance}; the emphasis of the present paper is not
the approximation itself but its combination with a free terminal time, a
sparse $L^1$-in-time control cost, and a singular-interaction collision-safety
constraint inside the target tube.

\paragraph{Mean-field and path-space outlook}

The mean-field limit of controlled interacting particle systems is classical
in the Markovian case
\cite{fornasier2014mean,carmona2018probabilistic,bongini2017mean}, but
the pair-dependent delays in our model yield, as discussed in
\Cref{sec:continuum}, a genuinely non-Markovian path-space description. A
natural framework for such limits is Lacker's work on closed-loop equilibria
and path-space McKean--Vlasov systems \cite{lacker2020convergence}, and the
trajectory-level formulation developed in
\cite{carmona2018probabilistic,carrillo2010asymptotic,ha2008particle}. We do
not pursue the limit in this paper; we record the path-space viewpoint only
to clarify that any PDE-based closure in our setting requires assumptions
beyond those used in the finite-$N$ analysis.

\section{Communication Law: Exact vs.\ Regularized}
\label{sec:network}
This section isolates a distinction that is essential for the subsequent
analysis. The physically interpretable communication law is, in general,
discontinuous in the joint configuration and is therefore unsuitable for the
strong It\^o theory developed later. Accordingly, we distinguish between (i)
an exact discontinuous law, used at the modeling level, and (ii) a regularized surrogate, used in the well-posedness analysis. No
singular-limit result as the regularization parameter tends to zero is
claimed in the present paper.

We first introduce notations used in this paper. $|\cdot|$ denotes the
Euclidean norm. $\calP_2(\R^{k})$ is the set of Borel probability measures on
$\R^{k}$ with finite second moment. $\delta_z$ is the Dirac measure at $z$.
$C_b^1$ denotes the space of bounded continuously differentiable functions
with bounded derivative. For a random variable $X$ and an event $A$, we
write $\E[X]$ and $\Prob(A)$ for expectation and probability on the
underlying filtered probability space
$(\Omega,\calF,\mathbb{F},\Prob)$ satisfying the usual conditions.

Let $\mathbf{x}=(x_1,\dots,x_N)\in(\R^d)^N$ denote the joint position
configuration of the $N$ agents. At the modeling level, communication is
described by:
\begin{enumerate}[label=(\roman*)]
\item an exact availability indicator
$\chi_{ij}^{\mathrm{ex}}(t,\mathbf{x})\in\{0,1\}$ for $i\neq j$, encoding whether
agent $j$ can communicate with agent $i$ at time $t$;
\item an exact topological rank $r_{ij}^{\mathrm{ex}}(\mathbf{x})\in\{1,\dots,N-1\}$
for $i\neq j$, giving the distance-based rank of agent $j$ relative to
agent $i$.
\end{enumerate}
Both objects are generally discontinuous in $\mathbf{x}$: when two or more
agents are equidistant from agent $i$, the neighbor ordering may change
discontinuously, and any communication rule based on that ordering may jump
as well. For this reason, the
exact law is not used directly in the strong well-posedness theory of
\Cref{sec:collision}.

Fix a regularization parameter $\varepsilon>0$. For each $i\neq j$, introduce
regularized coefficients
\begin{equation}
\begin{aligned}
  \chi_{ij}^{\varepsilon} &\in C\big([0,\infty)\times(\R^d)^N;[0,1]\big), \\[4pt]
  r_{ij}^{\varepsilon} &\in C\big((\R^d)^N;[1,N-1]\big),
\end{aligned}
\end{equation}
which are understood as continuous surrogates of the exact indicator and
rank. We assume that, away from switching surfaces and rank-tie
configurations,
$\chi_{ij}^{\varepsilon}(t,\mathbf{x}) \to \chi_{ij}^{\mathrm{ex}}(t,\mathbf{x})$ and
$r_{ij}^{\varepsilon}(\mathbf{x}) \to r_{ij}^{\mathrm{ex}}(\mathbf{x})$ as
$\varepsilon\downarrow 0$.

Let $\phi\in C_b^1([0,\infty);[0,\infty))$ be a bounded communication
profile, and let $K\in\{1,\dots,N-1\}$ be the nominal topological interaction
number. For $i\neq j$, define the regularized communication weights by
\begin{equation}\label{eq:weight}
  a_{ij}^{\varepsilon}(t,\mathbf{x}) := \chi_{ij}^{\varepsilon}(t,\mathbf{x})\,
    \phi\Bigl(\frac{r_{ij}^{\varepsilon}(\mathbf{x})}{K}\Bigr).
\end{equation}
By convention, we set $a_{ii}^{\varepsilon}(t,\mathbf{x})\coloneqq 0$ for
$i=1,\dots,N$.
\begin{remark}\label{rem:weak-framework}
If one wishes to work directly with the exact discontinuous laws
$\chi_{ij}^{\mathrm{ex}}$ and $r_{ij}^{\mathrm{ex}}$, then a weak-solution or
martingale-problem framework is more natural than the strong It\^o
framework adopted below.
\end{remark}
\section{Particle Dynamics and Admissible Controls}
\label{sec:dynamics}

Fix a filtered probability space $(\Omega,\calF,\mathbb F,\Prob)$ satisfying
the usual conditions. Fix a maximal communication delay $\tau_{\max}>0$. For
each pair $(i,j)$, let $\tau_{ij}:[0,\infty)\to[0,\tau_{\max}]$ be a
measurable delay function. Let $W_1,\dots,W_N$ be independent $\R^{r}$-valued
Brownian motions and $W^0$ an additional $\R^{r_0}$-valued Brownian motion
representing a common noise.

For $t\ge 0$, define the empirical measure
$\mu_t^N := \frac{1}{N}\sum_{k=1}^N \delta_{(x_k(t),v_k(t))} \in \calP_2(\R^{2d})$
and the total regularized communication weight for each agent
$i=1,\dots,N$ as
\begin{equation}\label{eq:eta}
  \eta_i^\varepsilon(t,\mathbf{x}) := \sum_{j\neq i} a_{ij}^\varepsilon(t,\mathbf{x}).
\end{equation}
To avoid the division-by-zero singularity in the normalized alignment term
while preserving a strong-solution framework, we introduce an additional
normalization parameter $\delta>0$ and define
\begin{equation}\label{eq:alignop}
  \mathcal{A}_i^{\varepsilon,\delta}(t,\mathbf{x},\mathbf{v}_t)
  \coloneqq \frac{1}{\eta_i^\varepsilon(t,\mathbf{x})+\delta}
    \sum_{j\neq i} a_{ij}^\varepsilon(t,\mathbf{x})\,
    \bigl(v_j(t-\tau_{ij}(t))-v_i(t)\bigr),
\end{equation}
where $\mathbf{v}_t(\theta):=\mathbf{v}(t+\theta)$ for $\theta\in[-\tau_{\max},0]$ is
shorthand for the delayed velocity segment. In particular,
$\mathcal A_i^{\varepsilon,\delta}$ is defined for every configuration
$\mathbf{x}$, including those where $\eta_i^\varepsilon(t,\mathbf{x})=0$.

For $t\ge 0$, the particle dynamics are given by
\begin{equation}\label{eq:pos}
  dx_i(t)=v_i(t)\,dt
\end{equation}
and
\begin{equation}\label{eq:vel}
\begin{aligned}
  dv_i(t) &= \Biggl[ \mathcal A_i^{\varepsilon,\delta}\bigl(t,\mathbf{x}(t),\mathbf{v}_t\bigr)
    - \frac{1}{N}\sum_{j\neq i}\nabla U\bigl(x_i(t)-x_j(t)\bigr) \\
  &\qquad - \nabla V_{\mathrm{obs}}\bigl(x_i(t)\bigr)
    - b_i\bigl(v_i(t)-\vstar\bigr) + B_i u_i(t) \Biggr]dt \\
  &\quad +\sigma_i\bigl(x_i(t),v_i(t),\mu_t^N\bigr)\,dW_i(t) \\
  &\quad +\sigma_i^{0}\bigl(x_i(t),v_i(t),\mu_t^N\bigr)\,dW^0(t),
\end{aligned}
\end{equation}
where
$\sigma_i:\R^d\times\R^d\times\calP_2(\R^{2d})\to\R^{d\times r}$ and
$\sigma_i^0:\R^d\times\R^d\times\calP_2(\R^{2d})\to\R^{d\times r_0}$ are the
idiosyncratic and common diffusion coefficients. The drift terms have the
following interpretations:
\begin{enumerate}[label=(\roman*)]
\item Delayed alignment: normalized weighted averaging of delayed
neighbor velocities via $\mathcal A_i^{\varepsilon,\delta}$;
\item Pairwise interaction: $U=U_{\mathrm{rep}}+U_{\mathrm{form}}$,
consisting of singular repulsion and smooth attraction;
\item Obstacle potential $V_{\mathrm{obs}}$: environmental
confinement or obstacle avoidance;
\item Velocity pinning $-b_i(v_i-\vstar)$ and control input $B_i u_i$: active only on leaders.
\end{enumerate}

The initial data are prescribed on $[-\tau_{\max},0]$ by paths
$x_i^{\mathrm{in}},v_i^{\mathrm{in}} \in C([-\tau_{\max},0];\R^d)$ satisfying
the kinematic compatibility
\begin{equation}\label{eq:compat}
  x_i^{\mathrm{in}}(t) = x_i^{\mathrm{in}}(0) - \int_t^0 v_i^{\mathrm{in}}(s)\,ds,
  \quad t\in[-\tau_{\max},0].
\end{equation}
We impose $x_i(t)=x_i^{\mathrm{in}}(t)$ and $v_i(t)=v_i^{\mathrm{in}}(t)$
for all $t\in[-\tau_{\max},0]$.

We assume $U(z)=U_{\mathrm{rep}}(z)+U_{\mathrm{form}}(z)$ for $z\neq 0$. The
first collision time is defined by
$\tau_{\mathrm{coll}} := \inf\{ t\ge 0:\min_{i\neq j}|x_i(t)-x_j(t)|=0 \}$
with $\inf\varnothing=+\infty$. In the analysis, local well-posedness is
first established on the collision-free region
$\{(\mathbf{x},\mathbf{v}):\ x_i\neq x_j \text{ for all } i\neq j\}$, and global
existence follows if $\tau_{\mathrm{coll}}=+\infty$ almost surely
\cite{cucker2010avoiding,carrillo2017sharp,park2010cucker}.

Let $\calL\subset\{1,\dots,N\}$ be the leader set with $|\calL|$ small
compared to $N$. For followers $i\notin\calL$, we set $b_i, B_i, u_i = 0$.
For a horizon $T>0$, the admissible controls are deterministic open-loop
controls:
\begin{equation}\label{eq:admissible}
\begin{aligned}
  \calU_T \coloneqq \bigl\{ &u=(u_i)_{i\in\calL} :
    u_i \in L^\infty(0,T;\R^{m_i}), \\[4pt]
  &\quad\quad \|u_i\|_{L^\infty}\le M_i \text{ for } i\in\calL \bigr\},
\end{aligned}
\end{equation}
where $M_i>0$ are prescribed actuator bounds.
\section{Time-Optimal Sparse Leader Control}
\label{sec:control}
\subsection{Performance metrics and target tube}
\label{subsec:metrics}
We now formulate the finite-$N$ sparse-leader control problem associated
with the delayed stochastic particle system of \Cref{sec:dynamics}; the
sparse-actuation perspective follows the line of work initiated by
\cite{caponigro2013sparse,caponigro2015sparse} and continued in
\cite{fornasier2014mean,albi2014boltzmann,bongini2017mean}.
Let $d_{ij}^{\star}\ge 0$ ($i,j=1,\dots,N$) denote the prescribed pairwise
formation distances, and let $\Psi:(0,\infty)\to[0,\infty)$ be a nonnegative
barrier-type penalty, for example $\Psi(r)=r^{-p}$ with $p>0$. For
$\mathbf{x}=(x_1,\dots,x_N)\in(\R^d)^N$ and $\mathbf{v}=(v_1,\dots,v_N)\in(\R^d)^N$,
define the performance metrics
\begin{subequations}
\begin{align}
  \calE_{\mathrm{vel}}(\mathbf{v}) &:= \frac{1}{N}\sum_{i=1}^{N}|v_i-\vstar|^2, \label{eq:Evel}\\
  \calE_{\mathrm{form}}(\mathbf{x}) &:= \frac{1}{N^2}\sum_{i\neq j}
    \Bigl||x_i-x_j|-d_{ij}^{\star}\Bigr|^2, \label{eq:Eform}\\
  \calE_{\mathrm{safe}}(\mathbf{x}) &:= \frac{1}{N^2}\sum_{i\neq j}
    \Psi\bigl(|x_i-x_j|\bigr). \label{eq:Esafe}
\end{align}
\end{subequations}
Here, $\calE_{\mathrm{vel}}$ measures velocity mismatch from the reference
$\vstar$, $\calE_{\mathrm{form}}$ measures deviation from the target
formation geometry, and $\calE_{\mathrm{safe}}$ penalizes small inter-agent
distances.

For tolerances $\varepsilon_v,\delta_f,\rho>0$---representing an admissible
terminal velocity error, formation error, and minimum separation,
respectively---we define the target tube
\begin{equation}\label{eq:target}
\begin{aligned}
  \calT_{\varepsilon_v,\delta_f,\rho}
  \coloneqq \Bigl\{ &(\mathbf{x},\mathbf{v})\in(\R^d)^{2N} :
    \calE_{\mathrm{vel}}(\mathbf{v})\le\varepsilon_v^{\,2}, \\
  &\,\,\,\,\,\calE_{\mathrm{form}}(\mathbf{x})\le\delta_f^{\,2},\
    \min_{i\neq j}|x_i-x_j|\ge\rho \Bigr\}.
\end{aligned}
\end{equation}

\begin{remark}[Separation of regularization and tolerance parameters]
\label{rem:eps-delta-disambiguation}
The target-tube tolerances $\varepsilon_v,\delta_f$ are design
quantities specifying admissible terminal errors; they are logically
independent of the regularization parameters $\varepsilon,\delta$ introduced
in \eqref{eq:weight}--\eqref{eq:alignop}, which merely smooth out
discontinuities in the communication law and the division-by-zero in the
normalized alignment operator. The subscripts $v$ (velocity) and $f$
(formation) are used throughout the paper to avoid notational ambiguity.
\end{remark}

\subsection{Chance-constrained free-terminal-time formulation}
\label{subsec:chance}
The central engineering question is: what is the smallest time in which the
leader set can steer the swarm into the target tube with a prescribed
success probability, while using sparse bounded actuation? Fix a risk
tolerance $\alpha\in(0,1)$ and weights $\lambda_1,\dots,\lambda_4\ge 0$.
Chance-constrained formulations of this type are standard in stochastic
optimization and stochastic model predictive control
\cite{nemirovski2007convex,campi2008exact,mesbah2016stochastic,
banerjea1998stochastic}.
\begin{problem}[Time-optimal sparse leader control]\label{prob:main}
Find a terminal time $T>0$ and an admissible control $u\in\calU_T$ that
minimize $J(T,u)$ subject to the terminal chance constraint
\begin{equation}\label{eq:chance}
  \Prob\Bigl( \bigl(\mathbf{x}(T),\mathbf{v}(T)\bigr)\in\calT_{\varepsilon_v,\delta_f,\rho} \Bigr)
    \ge 1-\alpha,
\end{equation}
where the cost functional is
\begin{equation}\label{eq:J}
\begin{aligned}
  J(T,u) \coloneqq\;& T + \lambda_1\sum_{i\in\calL}\int_0^T |u_i|_2\,dt \\
  &+ \E\biggl[ \int_0^T (\lambda_2\calE_{\mathrm{vel}}
     + \lambda_3\calE_{\mathrm{safe}})\,dt
     + \lambda_4\calE_{\mathrm{form}}(\mathbf{x}(T)) \biggr].
\end{aligned}
\end{equation}
\end{problem}
Three modeling choices are worth emphasizing:
\begin{enumerate}[label=(\roman*)]
\item Free terminal time.\enspace The horizon $T$ is a decision
variable, balancing convergence speed against performance costs.
\item $L^1$-in-time control penalty.\enspace The term
$\lambda_1\sum_{i\in\calL}\int_0^T |u_i(t)|_2\,dt$ promotes temporal
sparsity: the optimizer is incentivized to switch actuation off when passive
dynamics are favorable.
\item Hard terminal reliability with soft terminal shaping.\enspace
The chance constraint \eqref{eq:chance} imposes a hard probabilistic
requirement, while $\lambda_4\calE_{\mathrm{form}}(\mathbf{x}(T))$ favors terminal
states concentrated near the desired shape.
\end{enumerate}
\begin{remark}[Chance vs.\ almost-sure constraints]\label{rem:as-infeasible}
A stronger requirement
$\Prob( (\mathbf{x}(T),\mathbf{v}(T))\in\calT_{\varepsilon_v,\delta_f,\rho} )=1$ is
typically infeasible under nondegenerate Brownian perturbations, as
diffusion spreads probability mass over the state space
\cite{banerjea1998stochastic,nemirovski2007convex}.
\end{remark}
\begin{remark}[Stopping-time alternative]\label{rem:stopping}
An alternative replaces $T$ with the first hitting time
$\tau_{\calT} := \inf\{ t\ge 0: (\mathbf{x}(t),\mathbf{v}(t))\in\calT_{\varepsilon_v,\delta_f,\rho} \}$,
minimizing
\begin{equation*}
\begin{aligned}
  J_{\mathrm{hit}}(u) \coloneqq\;&
  \E\biggl[ \tau_{\calT}
    + \lambda_1\sum_{i\in\calL}\int_0^{\tau_{\calT}} |u_i|_2\,dt \\
  &\quad + \int_0^{\tau_{\calT}} (\lambda_2\calE_{\mathrm{vel}}
    + \lambda_3\calE_{\mathrm{safe}})\,dt \biggr].
\end{aligned}
\end{equation*}
\end{remark}
\begin{remark}[Computational treatment]\label{rem:computational-chance}
The constraint \eqref{eq:chance} is enforced via sample-based approximation:
simulating $M$ independent realizations and estimating the probability by
the empirical frequency.
\end{remark}
\begin{remark}[Formal sparsity heuristic]\label{rem:heuristic}
The $L^1$ cost suggests bang--off--bang behavior. In a Hamiltonian
description,
$H \approx \sum_{i\in\calL} [ p_i^\top B_i u_i + \lambda_1 |u_i|_2 ]$,
indicating that the optimal control should vanish when the switching signal
is below a threshold, as in the sparse-stabilization framework of
\cite{caponigro2013sparse,caponigro2015sparse,bongini2017mean}.
\end{remark}

\section{Collision Avoidance and Well-Posedness}
\label{sec:collision}

We now collect the assumptions under which the regularized delayed
stochastic particle system admits a pathwise unique strong solution up to
the first collision time, and globally provided collisions are excluded.
\begin{assumption}[Regularized communication law]\label{ass:net}
The communication profile satisfies
$\phi\in C_b^1([0,\infty);[0,\infty))$. For each $i\neq j$, the regularized
coefficients $\chi_{ij}^{\varepsilon}:[0,\infty)\times(\R^d)^N\to[0,1]$ and
$r_{ij}^{\varepsilon}:(\R^d)^N\to[1,N-1]$ are bounded and locally Lipschitz
in $\mathbf{x}$, uniformly on compact time intervals. Consequently, the weights
$a_{ij}^{\varepsilon}(t,\mathbf{x}) = \chi_{ij}^{\varepsilon}(t,\mathbf{x})\,
\phi(r_{ij}^{\varepsilon}(\mathbf{x})/K)$ are also bounded and locally Lipschitz
in $\mathbf{x}$.
\end{assumption}
\begin{assumption}[Delay measurability]\label{ass:delay}
For all $i\neq j$, the delay functions
$\tau_{ij}:[0,\infty)\to[0,\tau_{\max}]$ are measurable.
\end{assumption}
\begin{assumption}[Regularized alignment map]\label{ass:align}
Recall $\eta_i^\varepsilon$ and $\mathcal A_i^{\varepsilon,\delta}$ from
\eqref{eq:eta}--\eqref{eq:alignop}. We assume that for each $i$, on every
set $[0,T]\times\mathcal K\times\mathcal H$ where
$\mathcal K\subset(\R^d)^N$ is compact and collision-free and
$\mathcal H\subset C([-\tau_{\max},0];(\R^d)^N)$ is bounded (in the
supremum norm), the map $(t,\mathbf{x},\mathbf{v}_t)\mapsto
\mathcal A_i^{\varepsilon,\delta}(t,\mathbf{x},\mathbf{v}_t)$ is locally Lipschitz in
$(\mathbf{x},\mathbf{v}_t)$, uniformly in $t\in[0,T]$. Moreover,
$\mathcal A_i^{\varepsilon,\delta}$ has at most linear growth in
$(\mathbf{x},\mathbf{v}_t)$ (which follows from \Cref{ass:net} and the denominator
regularization by $\delta>0$).
\end{assumption}
\begin{assumption}[Potential regularity]\label{ass:pot}
The potentials $V_{\mathrm{obs}}, U_{\mathrm{form}} \in C^1(\R^d)$ have
locally Lipschitz gradients with at most linear growth. The repulsive
potential $U_{\mathrm{rep}}\in C^2(\R^d\setminus\{0\})$ is radial and
singular at the origin: $U_{\mathrm{rep}}(z)\to+\infty$ as $|z|\downarrow 0$.
Accordingly, $U = U_{\mathrm{rep}} + U_{\mathrm{form}}$ has a locally
Lipschitz gradient on every collision-free subset of $(\R^d)^N$.
\end{assumption}
\begin{assumption}[Noise regularity]\label{ass:noise}
For each $i$, the diffusion coefficients
$\sigma_i:\R^d\times\R^d\times\calP_2(\R^{2d})\to\R^{d\times r}$ and
$\sigma_i^0:\R^d\times\R^d\times\calP_2(\R^{2d})\to\R^{d\times r_0}$ are
locally Lipschitz in $(x,v,\mu)$ (where $\calP_2$ is equipped with the
$W_2$ metric) and satisfy linear growth bounds.
\end{assumption}
\begin{assumption}[Collision-free compatible history]\label{ass:hist}
The history paths
$x_i^{\mathrm{in}},v_i^{\mathrm{in}} \in C([-\tau_{\max},0];\R^d)$ satisfy
\eqref{eq:compat} and are collision-free:
$\inf_{s\in[-\tau_{\max},0]}\min_{i\neq j}
|x_i^{\mathrm{in}}(s)-x_j^{\mathrm{in}}(s)|>0$.
\end{assumption}

Let $\mathsf{Conf}_N(\R^d) := \{ \mathbf{x}\in(\R^d)^N : x_i\neq x_j \text{ for all } i\neq j \}$
be the collision-free configuration space, and recall the first collision
time
$\tau_{\mathrm{coll}} := \inf\{ t\ge 0 : \mathbf{x}(t)\notin\mathsf{Conf}_N(\R^d) \}$
(with $\inf\varnothing=+\infty$).
\begin{theorem}[Local strong well-posedness]\label{thm:local}
Suppose Assumptions~\ref{ass:net}--\ref{ass:hist} hold. For any $T>0$ and
$u\in\calU_T$, the system \eqref{eq:pos}--\eqref{eq:vel} admits a pathwise
unique strong solution on $[-\tau_{\max},\,T\wedge\tau_{\mathrm{coll}})$.
\end{theorem}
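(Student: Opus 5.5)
The plan is a standard localization argument for stochastic functional differential equations with locally Lipschitz coefficients, followed by a passage to the limit along an exhausting sequence of stopping times. For $n\in\mathbb N$ define the truncation region
\[
  D_n := \Bigl\{(\mathbf{x},\mathbf{v}) : \min_{i\neq j}|x_i-x_j|\ge 1/n,\ |\mathbf{x}|+|\mathbf{v}|\le n\Bigr\}.
\]
Take a smooth cutoff $\kappa_n$ equal to $1$ on $D_n$ and supported in $D_{n+1}$. Also introduce the radial retraction $\pi_n$ of history segments onto the ball of radius $n+1$ in $C([-\tau_{\max},0];(\R^d)^N)$. Form modified coefficients as follows. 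Replace $\nabla U_{\mathrm{rep}}$ by a globally Lipschitz function agreeing with it on $\{|z|\ge 1/n\}$. Replace $\mathcal A_i^{\varepsilon,\delta}(t,\mathbf{x},\mathbf{v}_t)$ by $\kappa_n(\mathbf{x},\mathbf{v}(t))\,\mathcal A_i^{\varepsilon,\delta}(t,\mathbf{x},\pi_n\mathbf{v}_t)$. Treat the diffusions analogously.

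For the empirical measure, note that $\mu\mapsto\mu^N_t$ is Lipschitz from $(\R^{2d})^N$ into $(\calP_2,W_2)$. The bound is $W_2(\mu^N(\mathbf z),\mu^N(\mathbf z'))\le N^{-1/2}|\mathbf z-\mathbf z'|$, obtained from the diagonal coupling. Hence Assumption~\ref{ass:noise} makes $\sigma_i(x_i,v_i,\mu^N_t)$ locally Lipschitz as a function of the joint state.

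With Assumptions~\ref{ass:net}, \ref{ass:align} and \ref{ass:pot}, the truncated drift and diffusion are then globally Lipschitz functionals of the path segment, uniformly in $t\in[0,T]$. The control term $B_iu_i(t)$ is a bounded deterministic measurable forcing by \eqref{eq:admissible}. The delays enter only through evaluation $\mathbf{v}\mapsto v_j(t-\tau_{ij}(t))$, which is $1$-Lipschitz in the sup norm of the segment. Measurability of $t\mapsto v_j(t-\tau_{ij}(t))$ follows from Assumption~\ref{ass:delay} and path continuity, so the coefficients are progressively measurable.

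The truncated system is therefore a functional SDE with globally Lipschitz, progressively measurable coefficients and continuous initial segment. By the classical Itô–Mohammed existence and uniqueness theory, proved by Picard iteration with the BDG inequality and Grönwall applied to $\E\sup_{s\le t}|\cdot|^2$, it has a pathwise unique strong solution $(\mathbf{x}^n,\mathbf{v}^n)$ on $[-\tau_{\max},T]$. The history is fed in via Assumption~\ref{ass:hist}, and \eqref{eq:compat} makes the position equation consistent on $[-\tau_{\max},0]$.

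Next define the exit time
\[
  \zeta_n := \inf\bigl\{t\ge 0:(\mathbf{x}^n(t),\mathbf{v}^n(t))\notin D_n\bigr\}\wedge T.
\]
By Assumption~\ref{ass:hist}, for $n$ large the initial segment lies in $D_n$. The key consistency step is to show that $\zeta_n\le\zeta_{n+1}$ and that $(\mathbf{x}^{n+1},\mathbf{v}^{n+1})=(\mathbf{x}^n,\mathbf{v}^n)$ on $[-\tau_{\max},\zeta_n]$ almost surely. On that interval both processes solve the same equation. A delayed argument $t-\tau_{ij}(t)\le t$ only looks into the past, which is also inside the stopped region or in the common history; there $|\mathbf{v}|\le n$, so $\pi_n$ acts trivially. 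Pathwise uniqueness for the $(n+1)$-system, applied via a stopped Grönwall estimate, then gives agreement.

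Set $\zeta:=\lim_n\zeta_n$ and define the solution by patching the $(\mathbf{x}^n,\mathbf{v}^n)$ on $[-\tau_{\max},\zeta)$. Any other strong solution coincides with it up to each $\zeta_n$ by the same stopped uniqueness argument. This gives pathwise uniqueness on $[-\tau_{\max},\zeta)$.

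The main obstacle is identifying $\zeta$ with $T\wedge\tau_{\mathrm{coll}}$, and more precisely ruling out velocity blow-up before a collision. On the event $\{\zeta<T\wedge\tau_{\mathrm{coll}}\}$, continuity of $\mathbf{x}$ on the compact interval $[0,\zeta]$ keeps $\min_{i\neq j}|x_i-x_j|$ bounded below there. So the exit from $D_n$ for large $n$ must be caused by $|\mathbf{x}|+|\mathbf{v}|\to\infty$. To exclude this I would use the following estimate. On the set where the minimal separation stays at least $\rho>0$, the singular force is bounded by $\sup_{|z|\ge\rho}|\nabla U_{\mathrm{rep}}|<\infty$ (the potential is radial and $C^2$ away from $0$). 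All remaining coefficients have linear growth by Assumptions~\ref{ass:align}--\ref{ass:noise}. Localizing with $\theta_\rho:=\inf\{t:\min_{i\neq j}|x_i-x_j|<\rho\}$, Itô's formula for $|\mathbf{x}|^2+|\mathbf{v}|^2$ together with BDG and Grönwall gives
\[
  \E\sup_{t\le\zeta_n\wedge\theta_\rho}\bigl(|\mathbf{x}|^2+|\mathbf{v}|^2\bigr)\le C(\rho,T),
\]
with a constant uniform in $n$. The sup-norm of the delayed segment is controlled here because the supremum is taken over the whole past. By Chebyshev, $\Prob(\zeta<T\wedge\theta_\rho)=0$. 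Letting $\rho\downarrow0$ along a sequence, so that $\theta_\rho\uparrow\tau_{\mathrm{coll}}$, yields $\zeta=T\wedge\tau_{\mathrm{coll}}$ almost surely, which completes the argument.
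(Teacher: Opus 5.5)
Your proposal follows essentially the same route as the paper's Appendix~A. You localize to bounded collision-free sets, apply the Lipschitz theory for SFDEs with bounded delay there, and continue to a maximal time. You then identify that time with $T\wedge\tau_{\mathrm{coll}}$ through a non-explosion estimate that holds while all separations stay above a fixed $\rho$, and send $\rho\downarrow 0$. The only real difference is that you spell out what the paper cites as standard local existence and continuation: the cutoff $\kappa_n$, the retraction $\pi_n$, the consistency of the $\zeta_n$, and the BDG-plus-Chebyshev step.

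One step, however, does not follow from the stated hypotheses, and the paper asserts the same step without proof in A.5. The step is the bound $\sup_{|z|\ge\rho}|\nabla U_{\mathrm{rep}}|<\infty$. Radiality and $C^2$ regularity on $\R^d\setminus\{0\}$ only bound $\nabla U_{\mathrm{rep}}$ on compact annuli $\{\rho\le|z|\le R\}$. Your moment estimate, by contrast, must be uniform in $n$, hence in $|\mathbf{x}|$.

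This matters, as a counterexample shows. The potential $U_{\mathrm{rep}}(z)=|z|^{-1}-|z|^4$ satisfies Assumption~\ref{ass:pot}. Take $N=2$ and switch off alignment, noise, leaders and the smooth potentials. Two agents released from rest then have separation $r=|x_1-x_2|$ obeying $\ddot r=r^{-2}+4r^3$, which reaches $+\infty$ in finite time without any collision. So for large $T$ there is no solution on $[-\tau_{\max},T\wedge\tau_{\mathrm{coll}})$.

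You need an extra hypothesis. One option is that $\nabla U_{\mathrm{rep}}$ is bounded, or of linear growth, on $\{|z|\ge\rho\}$ for every $\rho>0$. Another is that $U_{\mathrm{rep}}$ is bounded below. In that case, run your estimate on $\frac12|\mathbf{x}|^2+\frac12|\mathbf{v}|^2+\frac1N\sum_{i<j}U_{\mathrm{rep}}(x_i-x_j)$. The cancellation \eqref{eq:appB-cancel} removes the repulsive force from the drift, so no bound on $\nabla U_{\mathrm{rep}}$ is needed.

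The same growth issue means your ``globally Lipschitz function agreeing with $\nabla U_{\mathrm{rep}}$ on $\{|z|\ge 1/n\}$'' need not exist. Since $\nabla U_{\mathrm{form}}$ and $\nabla V_{\mathrm{obs}}$ are likewise only locally Lipschitz, simply multiply all potential terms by the cutoff $\kappa_n$, as you already do for the alignment term.
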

\begin{proof}[Proof sketch]
On $\mathsf{Conf}_N(\R^d)$, the singular drift $\nabla U_{\mathrm{rep}}$ is
locally Lipschitz. By Assumptions~\ref{ass:net}--\ref{ass:align}, the
regularized weights and alignment operator are also locally Lipschitz on
collision-free sets. Assumption~\ref{ass:noise} ensures Lipschitz diffusion.
Rewriting the system as a stochastic functional differential equation on
$C([-\tau_{\max},0];\R^{2dN})$, standard SFDE theory
\cite{mohammed1984stochastic,mao2007stochastic,von2010existence} up to the first
exit time from bounded collision-free sets applies. Details are in Appendix~\ref{app:local-proof}.
\end{proof}

\begin{theorem}[Global strong well-posedness]\label{thm:global}
Assume the hypotheses of \Cref{thm:local} and that the repulsive interaction
ensures $\Prob(\tau_{\mathrm{coll}}=+\infty)=1$. Then, for every $T>0$, the
solution extends uniquely to a pathwise unique strong solution on
$[-\tau_{\max},T]$.
\end{theorem}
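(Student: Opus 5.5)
The plan is to extend the local solution of \Cref{thm:local} by a localization argument. The key point is that, once collisions are excluded, the only way the solution can fail to exist on $[-\tau_{\max},T]$ is explosion of the state norm. Explosion is then ruled out by a Gr\"onwall estimate using the linear-growth parts of the assumptions.

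For $R>0$, define the compact collision-free region
\begin{equation*}
\mathcal K_R := \{\mathbf{x}: |\mathbf{x}|\le R,\ \min_{i\neq j}|x_i-x_j|\ge 1/R\}.
\end{equation*}
Let
\begin{equation*}
\zeta_R := \inf\{t\ge 0:\ \mathbf{x}(t)\notin\mathcal K_R \text{ or } |\mathbf{v}(t)|>R\}.
\end{equation*}
On $[0,T\wedge\zeta_R]$ all coefficients are globally Lipschitz after a cutoff. Hence the standard SFDE theory invoked in \Cref{thm:local} gives a pathwise unique strong solution up to $\zeta_R$. Pathwise uniqueness makes these solutions consistent in $R$: the solution for $R'>R$ agrees with the one for $R$ up to $\zeta_R$. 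This yields a maximal solution on $[-\tau_{\max},\zeta_\infty)$ with $\zeta_\infty:=\lim_R\zeta_R$.

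Next I separate the two ways $\zeta_R$ can occur: by collision (the separation dropping below $1/R$) or by the norm exceeding $R$. Define the norm exit time $\theta_R:=\inf\{t:|\mathbf{x}(t)|+|\mathbf{v}(t)|>R\}$. By hypothesis $\tau_{\mathrm{coll}}=+\infty$ a.s., and by continuity of paths the minimal separation is bounded below on any compact interval on which the path stays bounded. So it suffices to show $\Prob(\theta_R\le T)\to0$ as $R\to\infty$.

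To get this, apply It\^o's formula to $|\mathbf{x}(t\wedge\theta_R)|^2+|\mathbf{v}(t\wedge\theta_R)|^2$ and bound the drift terms one by one.
\begin{enumerate}
\item The alignment term has linear growth in $(\mathbf{x},\mathbf{v}_t)$ by \Cref{ass:align}. The delayed velocity segment is controlled by $\sup_{s\le t}|\mathbf{v}(s)|^2$ together with the history norm.
\item The terms $\nabla V_{\mathrm{obs}}$ and $\nabla U_{\mathrm{form}}$ have linear growth by \Cref{ass:pot}. The pinning term is linear, and $B_iu_i$ is bounded by $M_i$.
\item The diffusion trace is bounded linearly by \Cref{ass:noise}, using $W_2(\mu^N_t,\delta_0)^2\le\frac1N\sum_k(|x_k|^2+|v_k|^2)$.
\end{enumerate}
Taking the supremum in time and applying the Burkholder--Davis--Gundy inequality to the martingale terms, I expect to obtain
\begin{equation*}
\E\sup_{s\le t\wedge\theta_R}(|\mathbf{x}|^2+|\mathbf{v}|^2)\le C\Bigl(1+\|\text{history}\|^2+\int_0^t\E\sup_{r\le s\wedge\theta_R}(|\mathbf{x}|^2+|\mathbf{v}|^2)\,ds\Bigr),
\end{equation*}
up to the contribution of the repulsion, which is treated below. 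Gr\"onwall's inequality then gives a bound uniform in $R$, and Chebyshev's inequality yields $\Prob(\theta_R\le T)\le C_T/R^2\to0$.

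The main obstacle is the singular repulsion term $\frac1N\sum_{j}\nabla U_{\mathrm{rep}}(x_i-x_j)$, which has no linear growth bound near collisions. The first thing I would try is to exploit antisymmetry: because $U_{\mathrm{rep}}$ is radial, the sum $\sum_{i\neq j}v_i\cdot\nabla U_{\mathrm{rep}}(x_i-x_j)$ equals $\frac{d}{dt}$ of the total repulsive energy. This suggests using the augmented functional
\begin{equation*}
|\mathbf{x}|^2+|\mathbf{v}|^2+\frac1N\sum_{i\neq j}\bigl(U_{\mathrm{rep}}(x_i-x_j)+c\bigr),
\end{equation*}
where $c$ is a constant making the last sum nonnegative. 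One caveat: a general singular $U_{\mathrm{rep}}$ need not be bounded below at large distances, so a lower bound on it, or a shift that keeps the functional nonnegative, has to be added or assumed; I am not certain the stated assumptions supply this.

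With this functional, the singular drift cancels exactly in the It\^o computation. The price is new terms: the It\^o formula produces $\nabla U_{\mathrm{rep}}$ paired with the velocity difference, which matches the dissipation, but the remaining cross terms (alignment, noise) paired with the position part need care. Since positions evolve without noise, the energy term has no It\^o correction. Hence the Gr\"onwall estimate above closes uniformly in $R$, with the initial energy finite by \Cref{ass:hist}. The global solution is then obtained as the limit of the localized ones, and its uniqueness follows from pathwise uniqueness on each $[0,T\wedge\zeta_R]$.
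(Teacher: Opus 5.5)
Your argument works, granting a lower bound on $U_{\mathrm{rep}}$. You rightly flag this, and the paper also imposes it as a normalization in Appendix~\ref{app:collision-proof}. But your route differs from the paper's.

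\textbf{The paper's route.} There the global theorem is an immediate corollary. Step A.5 of Appendix~\ref{app:local-proof} already identifies the maximal existence time in \Cref{thm:local} with $T\wedge\tau_{\mathrm{coll}}$, so once $\tau_{\mathrm{coll}}>T$ almost surely nothing is left to prove. Norm explosion before collision is excluded in A.5 by stopping at a fixed separation level $\kappa$. There $\nabla U_{\mathrm{rep}}$ is controlled by a $\kappa$-dependent constant, and linear-growth SFDE estimates apply. So under the no-collision hypothesis the singular term is not really the obstacle you describe. You need uniformity only in the norm radius $R$, not in $\kappa$, and a countable union over $\kappa=1/n$ finishes.

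\textbf{Your route.} You bring in the augmented functional, which is $2\widetilde{\mathscr H}$ up to a constant, and its exact cancellation. The paper reserves this device for \emph{proving} $\Prob(\tau_{\mathrm{coll}}=+\infty)=1$.

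\textbf{What it buys.} $\nabla U_{\mathrm{rep}}$ disappears entirely, so you do not need it bounded (or of linear growth) on $\{|z|\ge\kappa\}$. A.5 tacitly uses this, although \Cref{ass:pot} imposes no condition at infinity. Your supremum-in-time Burkholder--Davis--Gundy treatment of the delayed velocities is also more careful than the pointwise bound $\widetilde\Gamma(t)\le C_T(1+\widetilde{\mathscr H}(t))$ of Appendix~\ref{app:collision-proof}. That bound ignores that $v_j(t-\tau_{ij}(t))$ is not controlled by $\widetilde{\mathscr H}(t)$.

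\textbf{What it costs.} You need the lower bound on $U_{\mathrm{rep}}$, and there is some redundancy. Had you localized with $\inf\{t:\widetilde{\mathscr H}(t)\ge R\}$ instead of $\theta_R$, the same estimate would yield no-collision too, making the hypothesis superfluous.

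\textbf{One technical point.} With $\theta_R$ alone, $\widetilde{\mathscr H}$ is not bounded on $[0,\theta_R]$. To get the a priori finiteness that Gr\"onwall requires, you should also stop at a separation level, or localize with $\widetilde{\mathscr H}$ itself as the paper does.
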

\begin{proof}[Proof sketch]
If $\Prob(\tau_{\mathrm{coll}}=+\infty)=1$, then $\tau_{\mathrm{coll}}>T$
almost surely for any $T>0$. Thus, the local solution never reaches the
singular set on $[0,T]$, allowing the unique extension throughout the
interval.
\end{proof}
\begin{remark}[Role of the no-collision hypothesis]\label{rem:nocollision-role}
The global result is conditional on the no-collision property
$\Prob(\tau_{\mathrm{coll}}=+\infty)=1$. This property is typically verified
by Lyapunov or barrier arguments exploiting the $U_{\mathrm{rep}}$
singularity, which we develop in \Cref{sec:collision-analysis} and
rigorously verify in \Cref{app:collision-proof}.
\end{remark}
\section{Collision-Avoidance Analysis}
\label{sec:collision-analysis}

This section records the structure of the argument used to upgrade the local
strong solution of \Cref{thm:local} to the global strong solution of
\Cref{thm:global}. The full rigorous proof is given in
\Cref{app:collision-proof}; the goal here is to record the
Lyapunov-functional structure and to explain why the pure repulsion energy
is insufficient and why an augmented functional is required. The
scheme parallels the deterministic Cucker--Smale collision-avoidance
literature \cite{cucker2010avoiding,park2010cucker,carrillo2017sharp},
adapted to the stochastic setting by absorbing the diffusion trace into the
drift estimate \cite{ha2009emergence,ahn2010stochastic}.

\subsection{Augmented Lyapunov functional}
\label{subsec:augmented-lyapunov}

Recall
$\tau_{\mathrm{coll}} := \inf\{t\ge 0 : \min_{i\neq j}|x_i(t)-x_j(t)|=0\}$
with $\inf\varnothing = +\infty$. For $t<\tau_{\mathrm{coll}}$, define the augmented Lyapunov functional
\begin{equation}\label{eq:Htilde-body}
\begin{aligned}
  \widetilde{\mathscr H}(t)
  := \;&\frac{1}{2}\sum_{i=1}^{N}|x_i(t)|^2
       + \frac{1}{2}\sum_{i=1}^{N}|v_i(t)|^2 \\
       &+ \frac{1}{N}\sum_{1\le i<j\le N} U_{\mathrm{rep}}\bigl(x_i(t)-x_j(t)\bigr).
\end{aligned}
\end{equation}
Without loss of generality, we assume $U_{\mathrm{rep}}\ge 0$ throughout
this section (achievable by an additive constant when $U_{\mathrm{rep}}$ is
bounded below). On $[0,\tau_{\mathrm{coll}})$, $\widetilde{\mathscr H}$
enjoys two complementary properties:

\begin{enumerate}[label=(\roman*)]
\item Collision barrier. Since $U_{\mathrm{rep}}(z)\to+\infty$ as
$|z|\downarrow 0$ by \Cref{ass:pot}, if
$\min_{i\neq j}|x_i(t)-x_j(t)|\downarrow 0$ then
$\widetilde{\mathscr H}(t)\to+\infty$.
\item Coercivity in $(\mathbf{x},\mathbf{v})$. The spatial and kinetic moments
are controlled by $\widetilde{\mathscr H}$:
$\tfrac{1}{2}\bigl(|\mathbf{x}(t)|^2+|\mathbf{v}(t)|^2\bigr) \le \widetilde{\mathscr H}(t)$.
\end{enumerate}

\begin{remark}[Why the repulsion energy alone is insufficient]
\label{rem:why-augmented}
A natural first attempt is to base the barrier argument on the pure
repulsion energy $\Phi(t) := \sum_{i<j}U_{\mathrm{rep}}(x_i-x_j)$. However,
$\Phi$ has no coercivity in $(\mathbf{x},\mathbf{v})$, and its It\^o drift is
$\sum_{i<j}\nabla U_{\mathrm{rep}}(x_i-x_j)\cdot(v_i-v_j)$, which cannot be
bounded by $C(1+\Phi)$ since it depends on the velocity increments
$v_i-v_j$ that $\Phi$ does not control. Including
$\tfrac12\sum_i|v_i|^2$ is needed so that the singular drift cancels
against the chain-rule term on the repulsion energy
(\Cref{subsec:augmented-ito}). Including $\tfrac12\sum_i|x_i|^2$ is then
needed so that the resulting Gr\"onwall estimate
(\Cref{subsec:augmented-drift-bound}) is uniform in the localization radius;
without that term, the smooth forces $\nabla U_{\mathrm{form}},
\nabla V_{\mathrm{obs}}$ and the diffusion trace cannot be dominated by the
functional alone. The augmented $\widetilde{\mathscr H}$ is therefore the
minimal object that simultaneously serves as a barrier and closes the
Gr\"onwall loop.
\end{remark}

\subsection{Localized It\^o identity}
\label{subsec:augmented-ito}

Fix $T>0$ and introduce the localization sequence
\begin{equation}\label{eq:tauR-body}
  \tau_R := \inf\{t\in[0,T\wedge\tau_{\mathrm{coll}}) :
    \widetilde{\mathscr H}(t)\ge R\}, \qquad R\ge 1.
\end{equation}
On $[0,\tau_R]$, the configuration remains collision-free and, by
coercivity, $|\mathbf{x}(t)|^2+|\mathbf{v}(t)|^2 \le 2R$ throughout. All coefficients of
\eqref{eq:pos}--\eqref{eq:vel} are therefore bounded on $[0,\tau_R]$, and
It\^o's formula may be applied term by term to \eqref{eq:Htilde-body}.

The crucial structural fact is that the contribution of the singular drift
$-\frac{1}{N}\sum_{j\neq i}\nabla U_{\mathrm{rep}}(x_i-x_j)$ in
$v_i\cdot dv_i$ cancels exactly with the chain-rule derivative of the
pairwise repulsion energy. Using the oddness of $\nabla U_{\mathrm{rep}}$
(from radiality) and the matching factor $\frac{1}{N}$ between the drift
and the energy,
\begin{equation}\label{eq:singular-cancel}
\begin{aligned}
  &-\sum_{i=1}^{N} v_i\cdot\frac{1}{N}\sum_{j\neq i}\nabla U_{\mathrm{rep}}(x_i-x_j) \\
  &\quad+ \frac{1}{N}\sum_{1\le i<j\le N}\nabla U_{\mathrm{rep}}(x_i-x_j)\cdot(v_i-v_j) = 0.
\end{aligned}
\end{equation}
Consequently, for $t\le T$,
\begin{equation}\label{eq:Htilde-ito-body}
  \widetilde{\mathscr H}(t\wedge\tau_R)
  = \widetilde{\mathscr H}(0)
  + \int_0^{t\wedge\tau_R}\widetilde{\Gamma}(s)\,ds
  + \widetilde{M}_R(t),
\end{equation}
where $\widetilde{M}_R$ is a local martingale and $\widetilde{\Gamma}$
contains only the smooth contributions---the delayed alignment, smooth
formation and obstacle forces, pinning and control terms, and the diffusion
trace---but no singular repulsive contribution. The explicit form of
$\widetilde{\Gamma}$ is given by \eqref{eq:appB-drift}.

\subsection{Drift estimate and barrier bound}
\label{subsec:augmented-drift-bound}

The decomposition \eqref{eq:Htilde-ito-body} reduces collision avoidance to
controlling the smooth drift $\widetilde{\Gamma}$. The key hypothesis,
stated precisely as \Cref{ass:appB-drift}, is that for every $T>0$ there is
a deterministic constant $C_T>0$, independent of $R$, such that
\begin{equation}\label{eq:drift-augmented-body}
  \widetilde{\Gamma}(t) \le C_T\bigl(1+\widetilde{\mathscr H}(t)\bigr),
  \qquad t\le T\wedge\tau_R,\ \text{a.s.}
\end{equation}
Because the singular potential has already been eliminated by
\eqref{eq:singular-cancel}, the bound \eqref{eq:drift-augmented-body} is a
standard consequence of the global linear-growth assumptions on the smooth
coefficients $\nabla U_{\mathrm{form}},\nabla V_{\mathrm{obs}},\sigma_i,
\sigma_i^0$, together with the linear growth of the regularized alignment
operator $\mathcal A_i^{\varepsilon,\delta}$ recorded in \Cref{ass:align}.
See \Cref{app:collision-proof}, Section~B.6, and
\cite{mao2007stochastic,khasminskii2011stochastic}.

Under \eqref{eq:drift-augmented-body}, the stopped local martingale
$\widetilde{M}_R$ is a true martingale on $[0,T]$, because its integrand is
bounded on $[0,\tau_R]$. Taking expectations in \eqref{eq:Htilde-ito-body}
yields
\begin{equation}\label{eq:gronwall-input}
\begin{aligned}
  \E[\widetilde{\mathscr H}(t\wedge\tau_R)]
  \le\;& \widetilde{\mathscr H}(0)
        + C_T\int_0^{t}\bigl(1+\E[\widetilde{\mathscr H}(s\wedge\tau_R)]\bigr)\,ds,
\end{aligned}
\end{equation}
and Gr\"onwall's lemma implies
\begin{equation}\label{eq:Htilde-bound}
  \sup_{R\ge 1}\sup_{0\le t\le T}
  \E[\widetilde{\mathscr H}(t\wedge\tau_R)] < \infty.
\end{equation}

\subsection{From the barrier estimate to no collision}
\label{subsec:augmented-no-collision}

Pathwise, $\tau_R\uparrow T\wedge\tau_{\mathrm{coll}}$ as $R\to\infty$.
Indeed, on $\{T<\tau_{\mathrm{coll}}\}$ the functional
$\widetilde{\mathscr H}$ is continuous and finite on $[0,T]$, so $\tau_R=T$
for all sufficiently large $R$; on $\{\tau_{\mathrm{coll}}\le T\}$ the
barrier property of \Cref{subsec:augmented-lyapunov} forces
$\widetilde{\mathscr H}(t)\to+\infty$ as $t\uparrow\tau_{\mathrm{coll}}$,
so $\tau_R\uparrow\tau_{\mathrm{coll}}$. By Fatou's lemma and
\eqref{eq:Htilde-bound},
\begin{equation}\label{eq:Htilde-fatou}
  \E\Bigl[\liminf_{R\to\infty}\widetilde{\mathscr H}(t\wedge\tau_R)\Bigr] <\infty,
  \qquad 0\le t\le T.
\end{equation}
If $\Prob(\tau_{\mathrm{coll}}\le T)>0$, then on that event
$\widetilde{\mathscr H}(t\wedge\tau_R)\to+\infty$ as $R\to\infty$,
contradicting \eqref{eq:Htilde-fatou}. Hence
\begin{equation}\label{eq:no-collision}
  \Prob\bigl(\tau_{\mathrm{coll}} = +\infty\bigr) = 1,
\end{equation}
which is the no-collision hypothesis required by \Cref{thm:global}.

\subsection{Model-dependent sufficient conditions}
\label{subsec:model-dependent}

Making the scheme rigorous in a concrete model amounts to verifying
\eqref{eq:drift-augmented-body} for the specific choices of
$U_{\mathrm{form}},V_{\mathrm{obs}},\sigma_i,\sigma_i^0$. Because the
singular $U_{\mathrm{rep}}$ does not appear in $\widetilde{\Gamma}$, the
drift bound reduces to a standard linear-growth estimate on the smooth
coefficients. \Cref{app:collision-proof}, Section~B.6, gives the
verification; see also
\cite{cucker2010avoiding,carrillo2017sharp,ha2009emergence,ahn2010stochastic}
for related singular and stochastic settings.

\begin{remark}[Role of the cancellation]
\label{rem:cancellation}
The cancellation \eqref{eq:singular-cancel} is the analytical core of the
argument: after cancellation, the drift $\widetilde\Gamma$ contains only
smooth contributions, and no pairwise term of the form
$|x_i-x_j|^{-p}$ appears. Together with the coercivity of
$\widetilde{\mathscr H}$, this is what allows a uniform-in-$R$ Gr\"onwall
estimate. The pure repulsion energy provides the barrier property but not
the coercivity; the pure kinetic energy provides the pairing with the
singular drift but no collision control. Only the augmented
$\widetilde{\mathscr H}$ provides both simultaneously.
\end{remark}

\begin{remark}[Two-step structure]
\label{rem:two-step-redux}
The strategy is two-step: first, prove local well-posedness on the
collision-free region (\Cref{thm:local}); second, establish the barrier
estimate \eqref{eq:no-collision} via the augmented Lyapunov functional
(this section and \Cref{app:collision-proof}).
\end{remark}
\section{Delay-Robust Communication and Continuum Outlook}
\label{sec:continuum}
The delay $\tau_{ij}(t)\in[0,\tau_{\max}]$ models the end-to-end latency
between the measurement of agent $j$'s state and its availability to
agent $i$'s controller at time $t$. Such latency may arise from wireless
channel access, sensing and estimation pipelines, packet transmission, or
geographical propagation effects. Because the delays are pair-dependent,
the alignment term at time $t$ depends explicitly on the collection of
delayed velocities $\{v_j(t-\tau_{ij}(t)):\ j\neq i\}$, and therefore on
the velocity history over the entire interval $[t-\tau_{\max},t]$, rather
than on a single common retarded time.

Accordingly, the delayed particle system is naturally non-Markovian in the
original state variables $(\mathbf{x}(t),\mathbf{v}(t))$. A Markovian reformulation is
possible only after enlarging the state space to include a suitable
history segment or memory variable.

In the formal mean-field limit $N\to\infty$, this memory effect prevents,
in general, a direct closure in terms of a classical Markovian
Vlasov--Fokker--Planck equation posed only on the instantaneous phase-space
variables $(x,v)\in\R^{2d}$
\cite{ha2008particle,carrillo2010asymptotic}. The natural continuum object
is instead a law on trajectory space, for example
\begin{equation}\label{eq:path-measure}
  \mu \in \calP\bigl(C([-\tau_{\max},T];\R^{2d})\bigr),
\end{equation}
or, equivalently, a time-indexed family of laws of path segments on
$\calP\bigl(C([-\tau_{\max},0];\R^{2d})\bigr)$. At this level, the limiting
dynamics should be interpreted as a path-dependent McKean--Vlasov-type
evolution \cite{carmona2018probabilistic,lacker2020convergence} rather than
as a closed Markovian kinetic equation on $\R^{2d}$.

For this reason, we do not postulate at the modeling level a
classical PDE closure involving only an instantaneous law or a single
delayed marginal such as $\mu_{t-\tau}$. Any reduction to a
finite-dimensional Markovian continuum description must be justified
separately and requires additional structure, such as a common constant
delay, an augmented-state representation, or a controlled
finite-dimensional memory truncation.

The present paper does not pursue the mean-field limit. We record the
path-space viewpoint only to clarify that pair-dependent communication
delays lead, in general, to a genuinely non-Markovian continuum
description, and that any PDE-based closure requires assumptions beyond
those used in the finite-$N$ analysis above.
\section{Numerical Case Study}
\label{sec:numerics}

The numerical experiments in this section are not intended to re-establish
the analytical well-posedness theory of
\Cref{sec:collision,sec:collision-analysis}. Their role is instead to
illustrate the control-design consequences of the model and to quantify the
trade-off between reliability, sparsity, and actuation effort. The study is
organized around three questions:
\begin{enumerate}[label=(\roman*),nosep]
\item can sparse leader actuation match or outperform a continuously active
baseline in satisfying the terminal chance constraint;
\item is there a nontrivial feasible window of terminal times;
\item how sensitive is terminal performance to leader density $|\calL|$ and
the communication-delay bound $\tau_{\max}$?
\end{enumerate}
This perspective is consistent with simulation-based assessment in
stochastic control \cite{mesbah2016stochastic} and with sample-based
evaluation of chance constraints \cite{nemirovski2007convex}.

\subsection{Experimental setup}
\label{subsec:num-setup}

We simulate $N=100$ agents evolving in $\R^2$. Unless otherwise stated, the
leader set has size $|\calL|=8$, and the maximum pair-dependent delay bound
is $\tau_{\max}=0.25\,\mathrm{s}$ at the nominal design point. The dynamics
include both idiosyncratic and common Brownian forcing. For each
realization, the pair-dependent delays $\tau_{ij}$ are drawn independently
and uniformly from $[0,\tau_{\max}]$ and then kept fixed over the
simulation horizon.

The target tube $\calT_{\varepsilon_v,\delta_f,\rho}$ from \eqref{eq:target}
is instantiated with
\begin{equation}\label{eq:tolerances-num}
  \varepsilon_v = 0.6, \qquad
  \delta_f = 3.0, \qquad
  \rho = 0.03,
\end{equation}
so that tube membership requires
\[
  \calE_{\mathrm{vel}}(T)\le \varepsilon_v^{\,2}=0.36,\qquad
  \calE_{\mathrm{form}}(T)\le \delta_f^{\,2}=9.0,\qquad
  \min_{i\neq j}|x_i(T)-x_j(T)|\ge 0.03.
\]
The prescribed terminal reliability is $1-\alpha=0.95$.

Time discretization is performed by an Euler--Maruyama scheme with step
$\Delta t=0.01\,\mathrm{s}$ \cite{higham2001algorithmic}. Delayed velocity values are retrieved from a
ring buffer containing
$\lceil\tau_{\max}/\Delta t\rceil+1$ snapshots. For each
parameter configuration, performance is evaluated over $M$ independent
sample paths, with $M=60$ for the nominal controller comparison and
$M=30$ for the sensitivity sweeps. All empirical success probabilities are
reported with 95\% Wilson score confidence intervals
\cite{brown2001interval}, which are preferable to Wald intervals in the
present regime of modest sample size and proportions near one.

We compare two bounded leader-feedback benchmarks built from the same raw
signal $u_i^{\mathrm{raw}}(t)$: a proportional--derivative response to the
leader's position error relative to the swarm centroid and velocity error
relative to $\vstar$, clipped componentwise to the actuator bound $M_i$.
\begin{enumerate}[label=(\roman*)]
\item \textbf{Baseline feedback.} The raw signal is applied continuously:
\[
  u_i(t)=u_i^{\mathrm{raw}}(t), \qquad i\in\calL,\ t\in[0,T].
\]
\item \textbf{Sparse feedback.} The same raw signal is thresholded:
\[
  u_i(t)=u_i^{\mathrm{raw}}(t)\,\mathbf{1}\{|u_i^{\mathrm{raw}}(t)|_2>\theta\},
\]
for a fixed threshold $\theta>0$, producing bang--off--bang actuation.
\end{enumerate}
This comparison isolates the effect of temporal sparsification: both
controllers use the same underlying feedback architecture and differ only
in whether weak control signals are suppressed. The sparse controller is
therefore a structured benchmark rather than the optimal solution of
\Cref{prob:main}.

For each realization we record the terminal metrics
$\calE_{\mathrm{vel}}(T)$, $\calE_{\mathrm{form}}(T)$, and
$\min_{i\neq j}|x_i(T)-x_j(T)|$, together with the component indicators
\[
  \mathbf{1}\{\calE_{\mathrm{vel}}(T)\le \varepsilon_v^{\,2}\},\qquad
  \mathbf{1}\{\calE_{\mathrm{form}}(T)\le \delta_f^{\,2}\},\qquad
  \mathbf{1}\{\min_{i\neq j}|x_i(T)-x_j(T)|\ge \rho\}.
\]
Their conjunction is the terminal tube indicator. We also record the
running actuation quantity
$\sum_{i\in\calL}\int_0^T |u_i(t)|_2\,dt$ and the leader duty cycle. The
Monte Carlo estimator $\hat P(\mathrm{tube})$ is the fraction of
realizations satisfying all three terminal conditions and is therefore the
sample-average approximation of the chance constraint \eqref{eq:chance}.
Reporting the three component fractions separately makes it possible to
identify which part of the target tube is actually binding.

\subsection{Nominal-design comparison}
\label{subsec:num-nominal}

At the nominal design point
$(T,|\calL|,\tau_{\max})=(8.0\,\mathrm{s},\,8,\,0.25\,\mathrm{s})$,
results over $M=60$ realizations are reported in
\Cref{tab:numerics-feedback} and \Cref{fig:numerical_results}(a).

The sparse controller attains
$\hat P(\mathrm{tube})=0.950$ ($k=57/60$, 95\% Wilson CI $[0.86,\,0.98]$),
thereby meeting the prescribed reliability threshold of $0.95$. The
baseline controller achieves
$\hat P(\mathrm{tube})=0.917$ ($k=55/60$, 95\% Wilson CI $[0.82,\,0.96]$),
which is nominally below threshold. Because the confidence intervals overlap
substantially, we do not interpret the observed gap of $0.033$ as
statistically decisive evidence of superior reliability per se. At
this sample size, the reliability comparison is suggestive rather than
conclusive. The robust finding is instead that the sparse controller is the
one that reaches the design threshold, and that it does so with
substantially less actuation.

The component-wise fractions show that the formation requirement
$\calE_{\mathrm{form}}(T)\le \delta_f^{\,2}$ is the binding part of the tube
for both controllers, whereas the velocity and safety conditions are
satisfied in essentially every realization. In particular, the minimum
pairwise distance remains comfortably above $\rho=0.03$ in all runs of both
controllers. Thus, the finite-sample reliability difference at the nominal
point is driven by the formation criterion rather than by any incipient
collision risk. This interpretation is consistent with the sample means in
\Cref{tab:numerics-feedback}: the average terminal formation error
$5.8$--$6.0$ lies below $\delta_f^{\,2}=9.0$, but not so far below that
tail realizations cannot exceed the tolerance.

The actuation savings are quantitatively more decisive. Relative to the
baseline, the sparse controller reduces the mean $L^1$ control expenditure
from $41.99$ to $36.57$ (a reduction of $12.9\%$) and lowers the mean leader
duty cycle from $100\%$ to $71.3\%$. This is accompanied by a small
improvement in mean total cost, from $41.36$ to $40.34$. Unlike the
tube-membership probability, these are direct sample means of per-run
quantities and are therefore not subject to binomial proportion uncertainty.

These findings are consistent with the heuristic discussion in
\Cref{rem:heuristic} and with the sparse-stabilization literature
\cite{caponigro2013sparse,caponigro2015sparse}:
persistent leader intervention is not uniformly beneficial in a delayed
stochastic swarm. Once the passive alignment and interaction dynamics begin
to drive the population toward the target regime, thresholded disengagement
can reduce unnecessary corrective effort without degrading terminal
performance.

\begin{table}[ht]
\centering
\caption{Nominal-design comparison of the baseline and sparse feedback
leader controllers at
$(T,|\calL|,\tau_{\max})=(8.0\,\mathrm{s},\,8,\,0.25\,\mathrm{s})$
over $M=60$ independent realizations. Tolerances are given in
\eqref{eq:tolerances-num}. Intervals are 95\% Wilson score confidence
intervals. Bold marks the more favorable entry where the comparison is
clearly favorable.}
\label{tab:numerics-feedback}
\small
\begin{tabular}{@{}lcc@{}}
\toprule
Metric & Baseline & Sparse \\
\midrule
\multicolumn{3}{@{}l}{\emph{Tube-membership fractions (Monte Carlo)}}\\
$\hat P(\calE_{\mathrm{vel}}(T)\le\varepsilon_v^{\,2})$
  & 0.967 & 0.983 \\
$\hat P(\calE_{\mathrm{form}}(T)\le\delta_f^{\,2})$
  & 0.950 & 0.967 \\
$\hat P(\min_{i\neq j}|x_i(T)-x_j(T)|\ge\rho)$
  & 1.000 & 1.000 \\
$\hat P(\mathrm{tube})$
  & 0.917 & \textbf{0.950} \\
\quad $k/M$
  & 55/60 & 57/60 \\
\quad 95\% Wilson CI
  & $[0.82,\,0.96]$ & $[0.86,\,0.98]$ \\
\midrule
\multicolumn{3}{@{}l}{\emph{Terminal statistics (sample means)}}\\
$\calE_{\mathrm{vel}}(T)$
  & 0.2388 & 0.2365 \\
$\calE_{\mathrm{form}}(T)$
  & 5.9601 & \textbf{5.8012} \\
$\min_{i\neq j}|x_i(T)-x_j(T)|$
  & 0.1291 & 0.1185 \\
\midrule
\multicolumn{3}{@{}l}{\emph{Actuation}}\\
Mean $L^1$ control cost
  & 41.99 & \textbf{36.57} \\
Mean leader duty cycle
  & 100\% & \textbf{71.3\%} \\
\midrule
Mean total cost
  & 41.36 & \textbf{40.34} \\
\bottomrule
\end{tabular}
\end{table}

\begin{figure*}[t]
\centering
\includegraphics[width=0.95\linewidth]{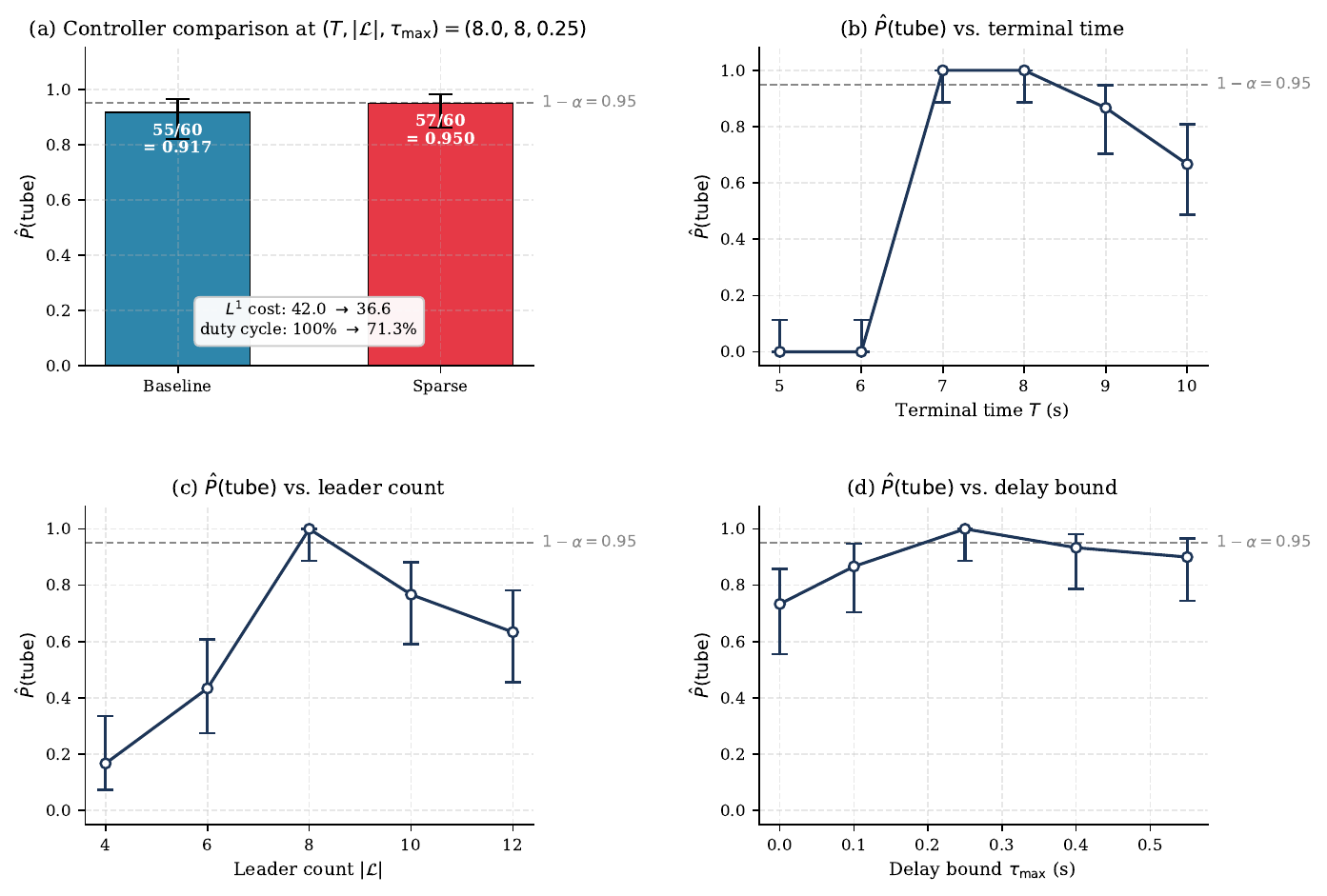}
\caption{Computational results for the finite-$N$ system.
(a) Controller comparison at the nominal design point
$(T,|\calL|,\tau_{\max})=(8.0\,\mathrm{s},\,8,\,0.25\,\mathrm{s})$,
$M=60$: bars show $\hat P(\mathrm{tube})$ with 95\% Wilson score confidence
intervals, and the inset reports the mean $L^1$ actuation cost and leader
duty cycle.
(b) Sparse-controller success probability across the terminal-time sweep
$T\in\{5,6,7,8,9,10\}\,\mathrm{s}$, $M=30$, showing the feasible window at
$T\in[7,8]\,\mathrm{s}$.
(c) Sensitivity to leader count
$|\calL|\in\{4,6,8,10,12\}$, $M=30$, showing a non-monotone dependence with
best tested performance at $|\calL|=8$.
(d) Sensitivity to the delay bound
$\tau_{\max}\in\{0.00,0.10,0.25,0.40,0.55\}\,\mathrm{s}$, $M=30$.
All error bars are 95\% Wilson score intervals, and the dashed line marks
the required reliability level $1-\alpha=0.95$.}
\label{fig:numerical_results}
\end{figure*}

A central feature of \Cref{prob:main} is that the terminal time $T$ is
itself a design variable. To illustrate why this matters, we sweep
$T\in\{5,6,7,8,9,10\}\,\mathrm{s}$ using the sparse controller, while
holding all other parameters at their nominal values and using $M=30$
realizations per point. The results are shown in
\Cref{fig:numerical_results}(b).

For short horizons $T\le 6.0\,\mathrm{s}$, the sparse controller fails
catastrophically:
$\hat P(\mathrm{tube})=0.000$ with 95\% Wilson CI $[0.00,\,0.12]$.
Component-wise inspection shows that both the velocity and formation
requirements fail in this regime: the swarm does not have enough time, under
pair-dependent delayed alignment, to contract velocity dispersion and
tighten the formation before the terminal time.

For intermediate horizons $T\in[7.0,8.0]\,\mathrm{s}$, every realization
satisfies the tube:
$\hat P(\mathrm{tube})=1.000$ with 95\% Wilson CI $[0.89,\,1.00]$.
Within this feasible window, the lowest mean total cost occurs at
$T=7.0\,\mathrm{s}$.

For longer horizons $T\ge 9.0\,\mathrm{s}$, the success probability declines
again; at $T=10.0\,\mathrm{s}$ we obtain
$\hat P(\mathrm{tube})=0.667$ with 95\% Wilson CI $[0.49,\,0.81]$.
Component-wise diagnosis indicates that the deterioration is primarily due
to the safety requirement $\min_{i\neq j}|x_i-x_j|\ge\rho$: over longer
horizons, nondegenerate stochastic forcing has enough time to erode
pairwise separations that were achieved earlier.

This sweep gives concrete support to the free-terminal-time formulation:
neither very short nor arbitrarily long horizons are optimal. The
intermediate window near $T=7$--$8\,\mathrm{s}$ is therefore a genuine
design outcome rather than a modeling convenience.

\subsection{Sensitivity to leader topology and communication delay}
\label{subsec:num-sensitivity}

We next assess the robustness of the sparse controller with respect to the
two structural parameters $|\calL|$ and $\tau_{\max}$. All sweeps use
$M=30$ realizations and keep the remaining parameters at their nominal
values.

\paragraph{Leader-density sensitivity.}
Varying the leader count over $|\calL|\in\{4,6,8,10,12\}$
(\Cref{fig:numerical_results}(c)) reveals a clearly non-monotone dependence
of $\hat P(\mathrm{tube})$ on the number of leaders. In the under-actuated
regime $|\calL|\le 6$, the control authority is insufficient to steer the
swarm into the target tube with high probability, and
$\hat P(\mathrm{tube})<0.5$. At the nominal choice $|\calL|=8$, the sparse
controller achieves $\hat P(\mathrm{tube})=1.000$ with 95\% Wilson interval
$[0.89,\,1.00]$. Increasing the number of leaders beyond this point does
not produce monotone improvement: for $|\calL|\in\{10,12\}$, the success
probability decreases again and the mean total cost rises. This is
consistent with the sparse-actuation philosophy of
\cite{caponigro2013sparse,caponigro2015sparse}: more direct actuation is not
automatically better, and leader density is itself a meaningful design
variable.

\paragraph{Delay sensitivity.}
Varying the delay bound over
$\tau_{\max}\in\{0.00,0.10,0.25,0.40,0.55\}\,\mathrm{s}$
(\Cref{fig:numerical_results}(d)), the empirical success probability is
lowest at the smallest tested delays and is at or above the reliability
threshold for $\tau_{\max}\ge 0.25\,\mathrm{s}$. We do not interpret
this pattern as evidence that larger delays are intrinsically beneficial.
Rather, it reflects the interaction of thresholded feedback, pair-dependent
delayed alignment, and common noise at the tuned operating point. The
robust conclusion is more modest: once tuned at the nominal design point,
the sparse controller remains computationally feasible over a nontrivial
range of delay bounds.

\subsection{Limitations and summaries of the numerical study}
\label{subsec:num-limits}

Several caveats are essential for a balanced interpretation of these
results.

First, all success probabilities are Monte Carlo estimates based on
$M\in\{30,60\}$ realizations. The Wilson intervals quantify the associated
sampling uncertainty, but they do not remove it. In particular, the
difference between the sparse and baseline tube probabilities at the
nominal point should be read as suggestive rather than definitive.

Second, the two controllers compared here share a fixed raw feedback
architecture and differ only in whether the signal is applied continuously
or thresholded. The sparse benchmark is not claimed to solve
\Cref{prob:main} optimally. Comparison with a genuine maximum-principle or
dynamic-programming solution is left for future work.

Third, the sensitivity studies are one-dimensional excursions from the
nominal operating point. Joint sweeps over $(|\calL|,\tau_{\max},T)$ could
reveal interactions that are not visible in the marginal curves shown here.

Fourth, the sample-average treatment of the chance constraint
\cite{nemirovski2007convex} is transparent and consistent,
but it is not sharp in a finite-sample sense. More refined scenario-based
or statistical-learning-style guarantees would require either additional
sampling effort or stronger structural assumptions.

The numerical study supports four main conclusions.
\begin{enumerate}[label=(\roman*),nosep]
\item At the nominal design point, the sparse controller reaches the
prescribed terminal chance level of $0.95$ and reduces mean actuation and
leader duty cycle relative to the continuously active baseline, although the
reliability gap itself is not statistically decisive at the present sample
sizes.
\item The terminal-time sweep reveals a genuine intermediate feasible window
near $T=7$--$8\,\mathrm{s}$, confirming the value of a free-terminal-time
formulation.
\item Leader density is a substantive design variable: too few leaders are
under-actuating, but additional leaders do not monotonically improve either
reliability or cost.
\item Delay sensitivity is nontrivial, yet the tuned sparse controller
remains feasible over a nontrivial tested range of pair-dependent delay
bounds.
\end{enumerate}

Taken together, these experiments are consistent with the broader structure
of the paper. The regularized delayed stochastic particle model is well
posed on the simulation horizon, the chance-constrained formulation captures
the relevant terminal design objective, and the sparse-control viewpoint is
supported by measurable reductions in actuation cost and duty cycle, with
reliability gains that are promising but not yet statistically decisive at
the sample sizes used here.
\section{Discussion and Conclusion}
\label{sec:discussion}

We have developed a finite-$N$ framework for delayed stochastic
leader--follower dynamics with pair-dependent communication delays,
singular repulsive interactions, and sparse bounded leader actuation.
The main analytical contribution is the explicit separation between the
exact discontinuous communication law and a regularized surrogate that is
compatible with a strong It\^o formulation. In this regularized setting,
the solution theory has a natural two-step structure: local strong
well-posedness on the collision-free region, followed by global extension
once collisions are excluded by a barrier argument. The key technical
device is an augmented Lyapunov functional that combines the
spatial moment, kinetic energy, and singular repulsive energy, thereby
providing both collision control and a Gr\"onwall estimate uniform in the
localization radius.

On the control side, we formulated a free-terminal-time,
chance-constrained sparse-leader problem that balances convergence speed,
actuation sparsity, and probabilistic safety. The numerical study supports
this formulation in a concrete way. At the nominal design point, the
sparse feedback benchmark satisfies the prescribed terminal reliability
threshold while using less total actuation and a substantially smaller
leader duty cycle than the continuously active baseline. The numerical
results also reveal a nontrivial feasible window in terminal time and a
non-monotone dependence on both leader density and delay. At the same
time, the Monte Carlo confidence intervals indicate that the reliability
advantage at the nominal point should be interpreted as suggestive rather
than statistically decisive at the sample sizes used here; the most robust
numerical conclusion is the reduction in actuation cost achieved by sparse
feedback.

Several directions remain open. A first is to derive explicit
model-dependent no-collision criteria for concrete repulsive potentials.
A second is to study the delayed chance-constrained control problem by
stochastic maximum-principle or dynamic-programming methods, beyond the
benchmark sparse feedback laws used here. These questions would sharpen both the
analytical and computational aspects of sparse control in delayed
stochastic swarms.
\appendix
\section{Appendix A: Local Well-Posedness Proof}
\label{app:local-proof}
This appendix provides a proof of \Cref{thm:local}. We work on the fixed
time interval $[-\tau_{\max},T]$, where $T>0$ is arbitrary, and for a fixed
admissible control $u\in\calU_T$.

The argument follows the standard localization strategy for stochastic
functional differential equations with bounded memory: one first verifies
that, on bounded collision-free sets, all coefficients are locally Lipschitz
in the current state and the delayed segment; one then applies a local
existence--uniqueness and continuation theorem for stochastic functional
differential equations, and finally identifies the maximal interval of
existence with the collision-free interval. Such local
existence--uniqueness and continuation results for SFDEs with bounded
delay are standard; see \cite{mohammed1984stochastic,mao2007stochastic,von2010existence}.
\subsection*{A.1. Reformulation as a stochastic functional differential equation}
Let
\[
  Y(t):=
  \bigl(x_1(t),\dots,x_N(t),v_1(t),\dots,v_N(t)\bigr)
  \in\R^{2dN},
\]
and denote by
\[
  Y_t(\theta):=Y(t+\theta),
  \qquad \theta\in[-\tau_{\max},0],
\]
the associated history segment. We write the history space as
\[
  \mathcal C:=C([-\tau_{\max},0];\R^{2dN}),
\]
equipped with the supremum norm
\[
  \|\varphi\|_\infty
  :=\sup_{\theta\in[-\tau_{\max},0]} |\varphi(\theta)|.
\]
For $\varphi=(\varphi^x,\varphi^v)\in\mathcal C$, with
\begin{align*}
  \varphi^x(\theta)&=\bigl(\varphi^x_1(\theta),\dots,\varphi^x_N(\theta)\bigr), \\
  \varphi^v(\theta)&=\bigl(\varphi^v_1(\theta),\dots,\varphi^v_N(\theta)\bigr),
\end{align*}
define the drift $F:[0,T]\times\mathcal C\to\R^{2dN}$ componentwise by
\begin{align*}
  F(t,\varphi)
  := \bigl(&F_1^x(t,\varphi),\dots,F_N^x(t,\varphi), \\
           &F_1^v(t,\varphi),\dots,F_N^v(t,\varphi)\bigr),
\end{align*}
where
\[
  F_i^x(t,\varphi):=\varphi_i^v(0),
\]
and
\begin{equation}\label{eq:appendix-drift}
\begin{aligned}
  F_i^v(t,\varphi)
  :=\;&
  \mathcal A_i^{\varepsilon,\delta}\bigl(t,\varphi^x(0),\varphi^v\bigr) \\
  &- \frac1N \sum_{j\neq i} \nabla U\bigl(\varphi_i^x(0)-\varphi_j^x(0)\bigr) \\
  &- \nabla V_{\mathrm{obs}}\bigl(\varphi_i^x(0)\bigr)
  - b_i\bigl(\varphi_i^v(0)-\vstar\bigr) \\
  &+ B_i u_i(t).
\end{aligned}
\end{equation}
Likewise, define the diffusion map
$G:[0,T]\times\mathcal C\to\R^{2dN\times(Nr+r_0)}$ by
\[
  G(t,\varphi)
  =
  \begin{pmatrix}
    0 \\
    \Sigma(t,\varphi)
  \end{pmatrix},
\]
where, letting $\mu_\varphi := \mu(\varphi^x(0),\varphi^v(0))$, the $i$-th
velocity block of $\Sigma$ is given by
\begin{equation*}
\begin{aligned}
  \Sigma_i(t,\varphi)
  := \Bigl(& 0,\dots,0,\, \sigma_i\bigl(\varphi_i^x(0),\varphi_i^v(0),\mu_\varphi\bigr), \\
  & 0,\dots,0,\, \sigma_i^0\bigl(\varphi_i^x(0),\varphi_i^v(0),\mu_\varphi\bigr) \Bigr),
\end{aligned}
\end{equation*}
and
\[
  \mu(\varphi^x(0),\varphi^v(0))
  :=
  \frac1N\sum_{k=1}^N \delta_{(\varphi_k^x(0),\varphi_k^v(0))}.
\]
Then \eqref{eq:pos}--\eqref{eq:vel} can be written compactly as the
stochastic functional differential equation
\begin{equation}\label{eq:appendix-sfde}
  dY(t)=F(t,Y_t)\,dt+G(t,Y_t)\,d\mathcal W(t),
\end{equation}
where $\mathcal W:=(W_1,\dots,W_N,W^0)$ is an $\R^{Nr+r_0}$-valued
Brownian motion.
\subsection*{A.2. Local Lipschitz continuity and local growth bounds on bounded collision-free sets}
Fix $R>0$ and $\kappa>0$, and define the bounded collision-free set
\begin{equation*}
\begin{aligned}
  \mathcal C_{R,\kappa}
  := \Bigl\{ \varphi\in\mathcal C:\,
    \|\varphi\|_\infty\le R,\; 
    |\varphi_i^x(0)-\varphi_j^x(0)|\ge \kappa
    \text{ for all }i\neq j \Bigr\}.
\end{aligned}
\end{equation*}
We claim that $F(t,\cdot)$ and $G(t,\cdot)$ are uniformly Lipschitz on
$\mathcal C_{R,\kappa}$, uniformly in $t\in[0,T]$. Note that, although
$\mathcal C_{R,\kappa}$ is bounded but not compact in $\mathcal C$, the
relevant coefficients depend on $\varphi$ only through finitely many
finite-dimensional evaluations (the present state and delayed velocity
samples), so the local-Lipschitz hypotheses in Assumptions~\ref{ass:net}, \ref{ass:align}, \ref{ass:pot}, and~\ref{ass:noise} apply.
\paragraph{Position component.}
Since $F_i^x(t,\varphi)=\varphi_i^v(0)$, we immediately have
\[
  |F_i^x(t,\varphi)-F_i^x(t,\psi)|
  \le \|\varphi-\psi\|_\infty.
\]
\paragraph{Alignment component.}
By \Cref{ass:align}, for each $i$ the map
$(t,\mathbf{x},\mathbf{v}_t)\mapsto \mathcal A_i^{\varepsilon,\delta}(t,\mathbf{x},\mathbf{v}_t)$
is locally Lipschitz on compact collision-free $\mathbf{x}$ and bounded $\mathbf{v}_t$.
Since $\varphi, \psi \in \mathcal C_{R,\kappa}$, their evaluations map into
compact collision-free sets in the state space and bounded sets in the
history space. Therefore, there exists $L_{A}(R,\kappa,T)>0$ such that
\begin{equation*}
\begin{aligned}
  &\bigl| \mathcal A_i^{\varepsilon,\delta}(t,\varphi^x(0),\varphi^v)
    - \mathcal A_i^{\varepsilon,\delta}(t,\psi^x(0),\psi^v) \bigr| \\
  &\quad \le L_A(R,\kappa,T)\,\|\varphi-\psi\|_\infty.
\end{aligned}
\end{equation*}
\paragraph{Potential terms.}
On the set
\begin{equation*}
  \Bigl\{ \mathbf{x}\in(\R^d)^N:\, |\mathbf{x}|\le R,\;
    \min_{i\neq j}|x_i-x_j|\ge\kappa \Bigr\},
\end{equation*}
the map
$\mathbf{x}\mapsto \frac1N\sum_{j\neq i}\nabla U(x_i-x_j)$
is Lipschitz. Indeed, by \Cref{ass:pot}, $\nabla U_{\mathrm{form}}$ is
locally Lipschitz on $\R^d$, while $\nabla U_{\mathrm{rep}}$ is $C^1$ on
$\R^d\setminus\{0\}$; hence $\nabla U$ is Lipschitz on every compact subset
of $\R^d\setminus\{0\}$. Since pairwise distances are bounded below by
$\kappa$, the arguments $x_i-x_j$ remain in such a compact set. Therefore,
for some $L_U(R,\kappa)>0$,
\begin{equation*}
\begin{aligned}
  &\Bigl| \frac1N\sum_{j\neq i}\nabla U(\varphi_i^x(0)-\varphi_j^x(0)) \\
  &\quad - \frac1N\sum_{j\neq i}\nabla U(\psi_i^x(0)-\psi_j^x(0)) \Bigr|
  \le L_U(R,\kappa)\,\|\varphi-\psi\|_\infty.
\end{aligned}
\end{equation*}
Similarly, since $\nabla V_{\mathrm{obs}}$ is locally Lipschitz,
\begin{equation*}
\begin{aligned}
  |\nabla V_{\mathrm{obs}}(\varphi_i^x(0))
   -\nabla V_{\mathrm{obs}}(\psi_i^x(0))|
  \le L_V(R)\,\|\varphi-\psi\|_\infty.
\end{aligned}
\end{equation*}
\paragraph{Pinning and control terms.}
The pinning term satisfies
\begin{equation*}
  \bigl| b_i(\varphi_i^v(0)-\vstar)-b_i(\psi_i^v(0)-\vstar) \bigr|
  \le |b_i|\,\|\varphi-\psi\|_\infty.
\end{equation*}
The control term $B_i u_i(t)$ is independent of the state and therefore
does not affect Lipschitz continuity.

Combining the previous bounds yields a constant $L_F(R,\kappa,T)>0$ such
that
$|F(t,\varphi)-F(t,\psi)| \le L_F(R,\kappa,T)\|\varphi-\psi\|_\infty$
for all $t\in[0,T]$ and $\varphi,\psi\in\mathcal C_{R,\kappa}$.
\paragraph{Diffusion term.}
For the empirical measure, the coupling induced by the identity map gives
\begin{equation*}
\begin{aligned}
  &W_2\bigl(\mu(\varphi^x(0),\varphi^v(0)),
           \mu(\psi^x(0),\psi^v(0))\bigr)^2 \\
  &\quad \le \frac1N \sum_{k=1}^N \Bigl( |\varphi_k^x(0)-\psi_k^x(0)|^2
    + |\varphi_k^v(0)-\psi_k^v(0)|^2 \Bigr) \\
  &\quad = \frac{1}{N}|\varphi(0)-\psi(0)|^2
  \le \|\varphi-\psi\|_\infty^2.
\end{aligned}
\end{equation*}
Hence, by \Cref{ass:noise}, there exists $L_G(R)>0$ such that
\[
  \|G(t,\varphi)-G(t,\psi)\|
  \le L_G(R)\,\|\varphi-\psi\|_\infty,
\]
for all $t\in[0,T]$ and $\varphi,\psi\in\mathcal C_{R,\kappa}$.

We have therefore shown that on every bounded collision-free set
$\mathcal C_{R,\kappa}$, both $F$ and $G$ are uniformly Lipschitz in the
history variable.

On $\mathcal C_{R,\kappa}$, the above arguments also imply boundedness of
the drift and diffusion. Indeed, the regularized alignment operator is
bounded on bounded sets by \Cref{ass:align}; the potential terms are
bounded on collision-free compact sets by \Cref{ass:pot}; and the pinning
and control terms are bounded because $\varphi\in\mathcal C_{R,\kappa}$
and $u\in\calU_T$. For the diffusion, \Cref{ass:noise} gives local
boundedness through the local Lipschitz property plus linear growth.
Consequently, on every $\mathcal C_{R,\kappa}$ there exists
$C_{R,\kappa,T}>0$ such that
\[
  |F(t,\varphi)| + \|G(t,\varphi)\|
  \le C_{R,\kappa,T},
\]
for all $t\in[0,T]$ and $\varphi\in\mathcal C_{R,\kappa}$.
\subsection*{A.3. Localized equation and stopping times}
Let
\begin{equation*}
\begin{aligned}
  \tau_{R,\kappa}  := \inf\Bigl\{ t\ge 0:\, |Y(t)|\ge R \ \text{or} \
  \min_{i\neq j}|x_i(t)-x_j(t)|\le \kappa \Bigr\} \wedge T.
\end{aligned}
\end{equation*}
On the stochastic interval $[-\tau_{\max},\tau_{R,\kappa}]$, the segment
process $Y_t$ remains in $\mathcal C_{R,\kappa}$. By the local Lipschitz
and local boundedness established above, the coefficients of
\eqref{eq:appendix-sfde} satisfy the hypotheses of the standard local
existence--uniqueness theorem for SFDEs with bounded delay
\cite{mohammed1984stochastic,mao2007stochastic,von2010existence}. Therefore, for
each $R,\kappa$ there exists a unique strong solution up to
$\tau_{R,\kappa}$. Standard continuation then yields a unique maximal
strong solution up to the limit
\[
  \tau_\ast
  :=
  \sup_{R\in\mathbb N,\ \kappa\downarrow 0}\tau_{R,\kappa}.
\]
The continuation principle for SFDEs states that either $\tau_\ast=T$, or
the solution exits every bounded collision-free set as
$t\uparrow\tau_\ast$. This is the SFDE analogue of the usual continuation
theorem for SDEs
\cite{von2010existence,mao2007stochastic,mohammed1984stochastic}.
\subsection*{A.5. Identification of the maximal interval}
We now show that $\tau_\ast = T\wedge \tau_{\mathrm{coll}}$. Fix
$\kappa>0$ and let $\sigma_\kappa := \inf\{t:\min_{i\neq j}|x_i(t)-x_j(t)|\le\kappa\}$.
On $[0,\sigma_\kappa]$, the repulsive gradient $\nabla U_{\mathrm{rep}}$ is
bounded (since pairwise distances are bounded below by $\kappa$), and by Assumptions~\ref{ass:net}, \ref{ass:align}, \ref{ass:pot} and~\ref{ass:noise} the remaining drift and the
diffusion have at most linear growth in $(\mathbf{x},\mathbf{v})$ (the control is
bounded by assumption). Standard linear-growth theory for stochastic
functional differential equations \cite{mao2007stochastic}
then excludes explosion of the state norm $|Y(t)|$ on $[0,\sigma_\kappa]$.
Letting $\kappa\downarrow 0$, we obtain $\sigma_\kappa\uparrow\tau_{\mathrm{coll}}$
almost surely, and hence $\tau_\ast=T\wedge\tau_{\mathrm{coll}}$. This
proves that the system admits a pathwise unique strong solution on
$[-\tau_{\max},\,T\wedge\tau_{\mathrm{coll}})$.
Since $T>0$ was arbitrary, the proof of \Cref{thm:local} is complete.
\hfill$\square$

The proof above is deliberately organized around bounded collision-free
localization sets because this is the natural framework for the present
model: the regularized communication law and the delayed alignment term
are smooth on such sets, while the only genuine singularity in the drift
arises from the repulsive interaction at collisions. This is precisely why
the local theory is stopped at $\tau_{\mathrm{coll}}$, and why the global
theorem requires the separate barrier argument developed in
\Cref{sec:collision-analysis} and Appendix~\ref{app:collision-proof}.
\section{Appendix B: Collision-Avoidance Proof}
\label{app:collision-proof}
This appendix provides a rigorous collision-avoidance argument consistent
with the local theory of \Cref{thm:local}. The key point is that the
singular repulsive energy alone does not control the absolute spatial size
of the configuration. Accordingly, we work with an augmented Lyapunov
functional containing the second spatial moment, the kinetic energy, and
the singular repulsive energy. This allows us to derive a localized
Gr\"onwall estimate with a constant independent of the localization
radius. The strategy follows the deterministic Cucker--Smale
collision-avoidance literature
\cite{cucker2010avoiding,park2010cucker,carrillo2017sharp}, adapted to
the stochastic setting via Lyapunov methods in the style of
\cite{khasminskii2011stochastic,mao2007stochastic}.

Throughout, fix $T>0$ and an admissible control $u\in\calU_T$, and let
\[
  (\mathbf{x},\mathbf{v})=
  (x_1,\dots,x_N,v_1,\dots,v_N)
\]
denote the local strong solution of \eqref{eq:pos}--\eqref{eq:vel} on
$[-\tau_{\max},\,T\wedge\tau_{\mathrm{coll}})$.
\subsection*{B.1. Augmented Lyapunov functional}
For $t<\tau_{\mathrm{coll}}$, define
\begin{equation}\label{eq:appB-Htilde}
\begin{aligned}
  \widetilde{\mathscr H}(t)
  :=\;&
  \frac12\sum_{i=1}^N |x_i(t)|^2
  + \frac12\sum_{i=1}^N |v_i(t)|^2 \\
  &+ \frac1N\sum_{1\le i<j\le N}
    U_{\mathrm{rep}}\bigl(x_i(t)-x_j(t)\bigr).
\end{aligned}
\end{equation}
Assuming without loss of generality that $U_{\mathrm{rep}} \ge 0$ (which
can be arranged by adding a constant when bounded below), and since
$U_{\mathrm{rep}}(z)\to+\infty$ as $|z|\downarrow 0$, the functional
$\widetilde{\mathscr H}$ is coercive with respect to collisions: if
$\min_{i\neq j}|x_i(t)-x_j(t)|\downarrow 0$, then
\[
  \widetilde{\mathscr H}(t)\to+\infty.
\]
Moreover, $\widetilde{\mathscr H}$ controls both the spatial and kinetic
moments:
\begin{equation}\label{eq:appB-coercive}
  \frac12|\mathbf{x}(t)|^2+\frac12|\mathbf{v}(t)|^2
  \le \widetilde{\mathscr H}(t),
  \qquad t<\tau_{\mathrm{coll}}.
\end{equation}
Introduce the localization sequence
\begin{equation}\label{eq:appB-tauR}
\begin{aligned}
  \tau_R
  :=
  \inf\Bigl\{
    t\in[0,T\wedge\tau_{\mathrm{coll}}):
    \widetilde{\mathscr H}(t)\ge R
  \Bigr\},
  \qquad R\ge 1.
\end{aligned}
\end{equation}
By \eqref{eq:appB-coercive}, on $[0,\tau_R]$ both $|\mathbf{x}|$ and $|\mathbf{v}|$
are bounded by a constant depending only on $R$, and the configuration
remains collision-free.

We compute the evolution of $\widetilde{\mathscr H}$ on $[0,\tau_R]$.
Since $dx_i(t)=v_i(t)\,dt$, the spatial moment satisfies
\begin{equation}\label{eq:appB-position-moment}
  d\Bigl(\frac12|x_i(t)|^2\Bigr)
  =
  x_i(t)\cdot v_i(t)\,dt.
\end{equation}
Because the noise enters only through the velocity equation, the
repulsive potential is differentiated by the ordinary chain rule:
\begin{equation}\label{eq:appB-rep-chain}
\begin{aligned}
  &d\Bigl[
    \frac1N\sum_{1\le i<j\le N}
    U_{\mathrm{rep}}\bigl(x_i(t)-x_j(t)\bigr)
  \Bigr] \\
  &\quad =
  \frac1N\sum_{1\le i<j\le N}
  \nabla U_{\mathrm{rep}}\bigl(x_i(t)-x_j(t)\bigr)\cdot
  \bigl(v_i(t)-v_j(t)\bigr)\,dt.
\end{aligned}
\end{equation}
Applying It\^o's formula to $\frac12|v_i(t)|^2$ gives
\begin{equation}\label{eq:appB-kinetic}
\begin{aligned}
  &d\Bigl(\frac12|v_i(t)|^2\Bigr) \\
  &= v_i(t)\cdot dv_i(t)  + \frac12
  \operatorname{tr}\!\Bigl(
    \sigma_i\sigma_i^\top
    + \sigma_i^0(\sigma_i^0)^\top
  \Bigr)\bigl(x_i(t),v_i(t),\mu_t^N\bigr)\,dt \\
  &= v_i(t)\cdot
  \Biggl[
    \mathcal A_i^{\varepsilon,\delta}\bigl(t,\mathbf{x}(t),\mathbf{v}_t\bigr) 
    - \frac1N\sum_{j\neq i}\nabla U\bigl(x_i(t)-x_j(t)\bigr) \\
  &\qquad\qquad
    - \nabla V_{\mathrm{obs}}\bigl(x_i(t)\bigr)
    - b_i\bigl(v_i(t)-\vstar\bigr)
    + B_i u_i(t)
  \Biggr]dt \\
  &\quad
  + v_i(t)\cdot
    \sigma_i\bigl(x_i(t),v_i(t),\mu_t^N\bigr)\,dW_i(t) \\
  &\quad
  + v_i(t)\cdot
    \sigma_i^0\bigl(x_i(t),v_i(t),\mu_t^N\bigr)\,dW^0(t) \\
  &\quad
  + \frac12
  \operatorname{tr}\!\Bigl(
    \sigma_i\sigma_i^\top
    + \sigma_i^0(\sigma_i^0)^\top
  \Bigr)\bigl(x_i(t),v_i(t),\mu_t^N\bigr)\,dt.
\end{aligned}
\end{equation}
Summing \eqref{eq:appB-position-moment}, \eqref{eq:appB-rep-chain}, and
\eqref{eq:appB-kinetic} over $i$, the contribution of the singular
repulsive force cancels exactly:
\begin{equation}\label{eq:appB-cancel}
\begin{aligned}
  &-\sum_{i=1}^N
    v_i(t)\cdot \frac1N\sum_{j\neq i}
    \nabla U_{\mathrm{rep}}\bigl(x_i(t)-x_j(t)\bigr) \\
  &\quad +
  \frac1N\sum_{1\le i<j\le N}
  \nabla U_{\mathrm{rep}}\bigl(x_i(t)-x_j(t)\bigr)\cdot
  \bigl(v_i(t)-v_j(t)\bigr) \\
  &\quad = 0,
\end{aligned}
\end{equation}
where we use that $U_{\mathrm{rep}}$ is radial, hence even, and therefore
$\nabla U_{\mathrm{rep}}(-z)=-\nabla U_{\mathrm{rep}}(z)$. (The smooth
formation potential $U_{\mathrm{form}}$ does not cancel and remains
in the drift $\widetilde\Gamma$ below.)
Consequently,
\begin{equation}\label{eq:appB-Htilde-identity}
\begin{aligned}
  \widetilde{\mathscr H}(t\wedge\tau_R)
  =\;
  \widetilde{\mathscr H}(0)
  + \int_0^{t\wedge\tau_R}\widetilde{\Gamma}(s)\,ds
  + \widetilde{M}_R(t),
\end{aligned}
\end{equation}
where $\widetilde{M}_R$ is the local martingale
\begin{equation}\label{eq:appB-mart}
\begin{aligned}
  \widetilde{M}_R(t)
  :=\;&
  \sum_{i=1}^N\int_0^{t\wedge\tau_R}
  v_i(s)\cdot
  \sigma_i\bigl(x_i(s),v_i(s),\mu_s^N\bigr)\,dW_i(s) \\
  &+ \sum_{i=1}^N\int_0^{t\wedge\tau_R}
  v_i(s)\cdot
  \sigma_i^0\bigl(x_i(s),v_i(s),\mu_s^N\bigr)\,dW^0(s),
\end{aligned}
\end{equation}
and the drift term is
\begin{equation}\label{eq:appB-drift}
\begin{aligned}
  \widetilde{\Gamma}(t)
  :=\;&
  \sum_{i=1}^N x_i(t)\cdot v_i(t) + \sum_{i=1}^N
    v_i(t)\cdot\mathcal A_i^{\varepsilon,\delta}\bigl(t,\mathbf{x}(t),\mathbf{v}_t\bigr) \\
  &- \frac1N\sum_{i=1}^N\sum_{j\neq i}
    v_i(t)\cdot\nabla U_{\mathrm{form}}\bigl(x_i(t)-x_j(t)\bigr) \\
  &- \sum_{i=1}^N
    v_i(t)\cdot\nabla V_{\mathrm{obs}}\bigl(x_i(t)\bigr) - \sum_{i=1}^N
    b_i\,v_i(t)\cdot\bigl(v_i(t)-\vstar\bigr) \\
  &+ \sum_{i=1}^N
    v_i(t)\cdot B_i u_i(t) \\
  &+ \frac12\sum_{i=1}^N
  \operatorname{tr}\!\Bigl(
    \sigma_i\sigma_i^\top+\sigma_i^0(\sigma_i^0)^\top
  \Bigr)\bigl(x_i(t),v_i(t),\mu_t^N\bigr).
\end{aligned}
\end{equation}

The global result is conditional on the following estimate.
\begin{assumption}[Drift control for the augmented Lyapunov functional]
\label{ass:appB-drift}
For every $T>0$, there exists a deterministic constant $C_T>0$,
independent of $R$, such that
\begin{equation}\label{eq:appB-drift-bound}
  \widetilde{\Gamma}(t)
  \le C_T\bigl(1+\widetilde{\mathscr H}(t)\bigr)
\end{equation}
for all $t\le T\wedge\tau_R$ almost surely.
\end{assumption}
\begin{remark}
This is the precise model-dependent input needed for collision avoidance.
It encodes, in a single inequality, the required interaction between the
delayed alignment term, the smooth formation and obstacle forces, the
pinning and control terms, and the diffusion growth; compare
\cite{khasminskii2011stochastic,mao2007stochastic}.
\end{remark}
\subsection*{B.2. Uniform expectation bound}
Under \Cref{ass:appB-drift}, the stopped local martingale
$\widetilde{M}_R$ is a true martingale on $[0,T]$, because all
coefficients are bounded on $[0,\tau_R]$. Taking expectations in
\eqref{eq:appB-Htilde-identity} yields
\begin{equation*}
\begin{aligned}
  &\E[\widetilde{\mathscr H}(t\wedge\tau_R)] \\
  &\quad \le
  \widetilde{\mathscr H}(0)
  + C_T\int_0^t
  \bigl(1+\E[\widetilde{\mathscr H}(s\wedge\tau_R)]\bigr)\,ds,
\end{aligned}
\end{equation*}
for $0\le t\le T$. By Gr\"onwall's lemma,
\begin{equation}\label{eq:appB-uniform}
  \sup_{R\ge 1}\sup_{0\le t\le T}
  \E[\widetilde{\mathscr H}(t\wedge\tau_R)]
  <\infty.
\end{equation}
Now let $R\to\infty$. As argued in \Cref{subsec:augmented-no-collision},
$\tau_R\uparrow T\wedge\tau_{\mathrm{coll}}$ almost surely, and since
$\widetilde{\mathscr H}\ge 0$, Fatou's lemma gives
\begin{equation}\label{eq:appB-fatou}
\begin{aligned}
  \E\Bigl[\liminf_{R\to\infty}\widetilde{\mathscr H}(t\wedge\tau_R)\Bigr]
  &\le
  \liminf_{R\to\infty}
  \E[\widetilde{\mathscr H}(t\wedge\tau_R)]
  <\infty,
\end{aligned}
\end{equation}
for $0\le t\le T$.

Assume, by contradiction, that $\Prob(\tau_{\mathrm{coll}}\le T)>0$. Then
on the event $\{\tau_{\mathrm{coll}}\le T\}$, there exist indices
$i\neq j$ such that
\[
  |x_i(t)-x_j(t)|\downarrow 0
  \qquad\text{as }t\uparrow\tau_{\mathrm{coll}}.
\]
Since $U_{\mathrm{rep}}(z)\to+\infty$ as $|z|\downarrow 0$, it follows
from \eqref{eq:appB-Htilde} that
$\widetilde{\mathscr H}(t)\to+\infty$ as
$t\uparrow\tau_{\mathrm{coll}}$ on that event. Consequently,
$\widetilde{\mathscr H}(t\wedge\tau_R)\to+\infty$ as $R\to\infty$ on a set
of strictly positive probability, contradicting \eqref{eq:appB-fatou}.
Hence $\Prob(\tau_{\mathrm{coll}}\le T)=0$. Since $T>0$ was arbitrary,
\begin{equation}\label{eq:appB-nocollision}
  \Prob(\tau_{\mathrm{coll}}=+\infty)=1.
\end{equation}
This proves the no-collision property required in \Cref{thm:global}.
\hfill$\square$
\subsection*{B.3. Verification mechanism for \Cref{ass:appB-drift}}
We finally indicate how \eqref{eq:appB-drift-bound} is checked in
practice. On $[0,\tau_R]$, we have $\widetilde{\mathscr H}(t)\le R$, so by
\eqref{eq:appB-coercive},
\[
  |\mathbf{x}(t)|^2+|\mathbf{v}(t)|^2
  \le 2\,\widetilde{\mathscr H}(t).
\]
Hence every term in \eqref{eq:appB-drift} can be controlled by
$1+\widetilde{\mathscr H}(t)$, provided the smooth forces and diffusion
coefficients satisfy global linear-growth bounds \cite{mao2007stochastic,khasminskii2011stochastic}.
For example,
\[
  \sum_{i=1}^N x_i\cdot v_i
  \le \frac12|\mathbf{x}|^2+\frac12|\mathbf{v}|^2
  \le \widetilde{\mathscr H}(t),
\]
and similarly
\begin{equation*}
\begin{aligned}
  \Bigl|
    \sum_{i=1}^N v_i\cdot B_i u_i
  \Bigr|
  &\le C\bigl(1+|\mathbf{v}|^2\bigr) \\
  &\le C\bigl(1+\widetilde{\mathscr H}(t)\bigr),
\end{aligned}
\end{equation*}
because $u\in\calU_T$ is bounded. The alignment term is treated
analogously, using the linear-growth bound on
$\mathcal A_i^{\varepsilon,\delta}$ recorded in \Cref{ass:align}, and the
diffusion trace term is controlled by the linear-growth assumption on
$\sigma_i,\sigma_i^0$ in \Cref{ass:noise}. Therefore, once the model is
specified so that these coefficients have at most linear growth in
$(\mathbf{x},\mathbf{v})$, the drift estimate \eqref{eq:appB-drift-bound} follows.
\begin{remark}
The analytical core is the inclusion of the spatial moment
$\frac12\sum_i |x_i|^2$ in the Lyapunov functional. Without that term,
the energy does not control absolute spatial motion and the Gr\"onwall
estimate does not close; see the discussion in
\Cref{rem:why-augmented}.
\end{remark}

\section{Appendix C: Notation Table}
\label{app:notation}
\begin{table}[htbp]
\centering
\caption{Principal notation used throughout the paper, with the
regularization parameters and the tube tolerances now typographically
distinct.}
\label{tab:notation}
\small
\begin{tabular}{@{}l p{0.65\columnwidth}@{}}
\toprule
Symbol & Meaning \\
\midrule
$N$ & Number of agents \\
$d$ & Spatial dimension \\
$\mathbf{x}=(x_1,\dots,x_N)\in(\R^d)^N$ & Joint position configuration \\
$\mathbf{v}=(v_1,\dots,v_N)\in(\R^d)^N$ & Joint velocity configuration \\
$(x_i,v_i)$ & Position--velocity state of agent $i$ \\
$\mathbf{v}_t$ & Velocity history segment on $[-\tau_{\max},0]$ \\
$\calL$ & Leader set \\
$K$ & Nominal topological interaction number \\
$\tau_{ij}(t)$ & Pair-dependent communication delay from $j$ to $i$ \\
$\tau_{\max}$ & Uniform upper bound on all delays \\
$\chi_{ij}^{\mathrm{ex}}$ & Exact communication availability indicator \\
$r_{ij}^{\mathrm{ex}}$ & Exact topological rank of $j$ relative to $i$ \\
\midrule
\multicolumn{2}{@{}l}{\emph{Regularization parameters (dynamics)}}\\
$\varepsilon$ & Rank/availability regularization parameter in the
communication law \eqref{eq:weight} \\
$\delta$ & Denominator regularization in the alignment operator
\eqref{eq:alignop} \\
$\chi_{ij}^{\varepsilon}$ & Regularized availability surrogate \\
$r_{ij}^{\varepsilon}$ & Regularized topological-rank surrogate \\
$\phi$ & Bounded communication profile \\
$a_{ij}^{\varepsilon}$ & Regularized communication weight \\
$\eta_i^\varepsilon$ & Total regularized communication weight of agent $i$ \\
$\mathcal A_i^{\varepsilon,\delta}$ & Regularized normalized delayed
alignment operator \\
\midrule
\multicolumn{2}{@{}l}{\emph{Interaction, noise, well-posedness}}\\
$U_{\mathrm{rep}}$ & Singular repulsive interaction potential \\
$U_{\mathrm{form}}$ & Smooth formation interaction potential \\
$U=U_{\mathrm{rep}}+U_{\mathrm{form}}$ & Total pairwise interaction potential \\
$V_{\mathrm{obs}}$ & Obstacle/environmental potential \\
$\vstar$ & Target migration/reference velocity \\
$b_i$ & Velocity-pinning coefficient \\
$B_i$ & Actuation matrix for agent $i$ \\
$u_i$ & Control input of leader $i$ \\
$M_i$ & Actuator bound for leader $i$ \\
$\calU_T$ & Admissible control class on $[0,T]$ \\
$W_i$ & Idiosyncratic Brownian motion of agent $i$ \\
$W^0$ & Common Brownian motion \\
$\sigma_i$ & Idiosyncratic diffusion coefficient \\
$\sigma_i^0$ & Common-noise diffusion coefficient \\
$\mu_t^N$ & Empirical measure of the $N$-agent system at time $t$ \\
$\tau_{\mathrm{coll}}$ & First collision time \\
$\mathsf{Conf}_N(\R^d)$ & Collision-free configuration space \\
$\widetilde{\mathscr H}(t)$ & Augmented Lyapunov functional
\eqref{eq:Htilde-body} \\
\midrule
\multicolumn{2}{@{}l}{\emph{Control-problem tolerances (terminal design)}}\\
$\varepsilon_v$ & Terminal velocity tolerance in the target tube
\eqref{eq:target} \\
$\delta_f$ & Terminal formation tolerance in the target tube \\
$\rho$ & Minimum separation threshold in the target tube \\
$\calT_{\varepsilon_v,\delta_f,\rho}$ & Terminal target tube \\
$\alpha$ & Risk level in the terminal chance constraint \\
\midrule
\multicolumn{2}{@{}l}{\emph{Performance metrics and objectives}}\\
$\calE_{\mathrm{vel}}$ & Velocity-tracking error functional \\
$\calE_{\mathrm{form}}$ & Formation-error functional \\
$\calE_{\mathrm{safe}}$ & Running safety/barrier functional \\
$\Psi$ & Barrier-type penalty used in $\calE_{\mathrm{safe}}$ \\
$J(T,u)$ & Free-terminal-time cost functional \\
$\lambda_1,\dots,\lambda_4$ & Cost weights in $J(T,u)$ \\
$\tau_{\calT}$ & First hitting time of the target tube \\
$\mu\in\calP(C([-\tau_{\max},T];\R^{2d}))$ & Path-space law in the continuum
outlook \\
\bottomrule
\end{tabular}
\end{table}

\bibliography{new}    

\end{document}